\definecolor{darkblue}{rgb}{0.0,0.0,0.3}
\theoremstyle{plain}
\newtheorem{theorem}{Theorem}[section]
\newtheorem*{theorem*}{Theorem}
\newtheorem{lemma}[theorem]{Lemma}
\newtheorem{proposition}[theorem]{Proposition}
\newtheorem*{proposition*}{Proposition}
\newtheorem{corollary}[theorem]{Corollary}
\newtheorem*{corollary*}{Corollary}
\theoremstyle{definition}
\newtheorem{remark}[theorem]{Remark}
\numberwithin{equation}{section}
\renewcommand{\Im}{\operatorname{Im}}
\renewcommand{\Re}{\operatorname{Re}}
\DeclareMathOperator{\SL}{SL}
\title{Short-Interval Averages of Sums of Fourier Coefficients of Cusp Forms}
\author[Hulse]{Thomas A. Hulse}
\thanks{Research of the first author was supported by a Coleman Postdoctoral Fellowship at Queen's University.}
\author[Kuan]{Chan Ieong Kuan}
\author[Lowry-Duda]{David Lowry-Duda}
\thanks{The third author is supported by the National Science Foundation Graduate Research Fellowship Program under Grant No. DGE 0228243.}
\author[Walker]{Alexander Walker}
\begin{document}

\maketitle

\begin{abstract}
  Let $f$ be a weight $k$ holomorphic cusp form of level one, and let $S_f(n)$ denote the sum of the first $n$ Fourier coefficients of $f$.
  In analogy with Dirichlet's divisor problem, it is conjectured that $S_f(X) \ll X^{\frac{k-1}{2} + \frac{1}{4} + \epsilon}$.
  Understanding and bounding $S_f(X)$ has been a very active area of research.
  The current best bound for individual $S_f(X)$ is $S_f(X) \ll X^{\frac{k-1}{2} + \frac{1}{3}} (\log X)^{-0.1185}$ from Wu~\cite{wu2009power}.

  Chandrasekharan and Narasimhan~\cite{chandrasekharan1962functional} showed that the Classical Conjecture for $S_f(X)$ holds on average over intervals of length $X$.
  Jutila~\cite{jutila1987lectures} improved this result to show that the Classical Conjecture for $S_f(X)$ holds on average over short intervals of length $X^{\frac{3}{4} + \epsilon}$.
  Building on the results and analytic information about $\sum \lvert S_f(n) \rvert^2 n^{-(s + k - 1)}$ from our recent work~\cite{hkldw}, we further improve these results to show that the Classical Conjecture for $S_f(X)$ holds on average over short intervals of length $X^{\frac{2}{3}}(\log X)^{\frac{1}{6}}$.
\end{abstract}


\section{Introduction and Statement of Results}

Let $f$ be a holomorphic cusp form of integer weight $k \geq 2$ on $\SL_2(\mathbb{Z})$, 
and let the Fourier expansion of $f$ at the cusp at infinity be given by
\[
  f(z) = \sum_{n \geq 1} a(n)e(nz),
\]
where $e(z) := e^{2\pi iz}$. We adopt the convention that $a(n) = 0$ for $n \leq 0$. Let $S_f(n)$ denote the $n$th partial sum,
\begin{equation*}
  S_f(n) := \sum_{m \leq n} a(m).
\end{equation*}

Folowing Deligne's celebrated bound~\cite{Deligne}, we have $a(m) \ll m^{\frac{k-1}{2}+\epsilon}$, which gives the trivial bound $S_f(n) \ll n^{\frac{k-1}{2}+1+\epsilon}$.
One might assume that the signs of each $a(m)$ are random, which would produce a ``square-root cancellation estimate'' $S_f(n) \ll n^{\frac{k-1}{2} + \frac{1}{2} + \epsilon}$.
However, significantly more cancellation is known.
Deligne~\cite{Deligne} showed that
\begin{equation}\label{eq:one_third_with_epsilon}
  S_f(n) \ll n^{\frac{k-1}{2} + \frac{1}{3} + \epsilon}.
\end{equation}
Jutila~\cite{jutila1987lectures} and Hafner and Ivi\'{c}~\cite{HafnerIvic89} removed the $\epsilon$ in the above bound and Rankin~\cite{rankin1990sums} showed how to get logarithmic savings. More recently, Wu~\cite{wu2009power} showed that
\begin{equation*}
  S_f(n) \ll n^{\frac{k-1}{2} + \frac{1}{3}} (\log n)^{-0.1185}
\end{equation*}
and used this result to improve lower bounds on numbers of Hecke eigenvalues of the same signs.

In the other direction, Hafner and Ivi\'{c}~\cite{HafnerIvic89} showed that, for some positive constant $D$,
\begin{equation*}
  S_f(n) = \Omega_{\pm} \left( n^{\frac{k-1}{2} + \frac{1}{4}} \left(\frac{D (\log \log x)^{1/4}}{(\log \log \log x)^{3/4}}\right)\right).
\end{equation*}
After some normalization, this result resembles the conjectured $\frac{1}{4}$-exponent in the Gauss circle and Dirichlet divisor problems, and similarly it is natural to investigate the best exponent, $\alpha$, such that $S_f(n) \ll n^{\frac{k-1}{2}+\alpha+\epsilon}$. As with the circle and divisor problems, it is speculated that
\begin{equation}\label{eq:classical_conjecture}
  S_f(n) \ll n^{\frac{k-1}{2} + \frac{1}{4} + \epsilon}
\end{equation}
and we refer to this hypothesis as the ``Classical Conjecture,'' after Hafner and Ivi\'{c}.

Chandrasekharan and Narasimhan~\cite{chandrasekharan1962functional, chandrasekharan1964mean} proved that the Classical Conjecture is true \emph{on average} in long intervals by showing that for $X\gg 1$,
\begin{equation}\label{eq:CN_classical_long_intervals}
  \frac{1}{X} \sum_{n \leq X} \vert S_f(n) \vert^2 = C X^{k - 1 + \frac{1}{2}} +O\left(X^{k-1+\epsilon}\right),
\end{equation}
with an explicit constant $C$, which gives~\eqref{eq:classical_conjecture} on average via the Cauchy-Schwarz inequality.

Jutila~\cite{jutila1987lectures} improved~\eqref{eq:CN_classical_long_intervals} by showing that the Classical Conjecture is true on average in intervals of length $X^{\frac{3}{4} + \epsilon}$ around $X$.
Very recently, Ernvall-Hyt\"{o}nen~\cite{ernvall2015mean} extended Jutila's work and investigated average orders of short sums of Fourier coefficients.

Short-interval average results can be used to derive bounds for individual $S_f(n)$.
In particular, the Classical Conjecture would follow from a proof of the Classical Conjecture \emph{on average} in intervals of length $X^{\frac{1}{4}+\epsilon}$ around $X$.

Here, we show that the Classical Conjecture holds on average in intervals of length less than $X^{\frac{3}{4}+\epsilon}$. In particular, we prove the following theorem (we actually prove slightly more, see Remark~\ref{rmk:main_theorem}), which shows that the Classical Conjecture is true on average within intervals around $X$ of length $X^{\frac{2}{3}}(\log X)^{\frac{1}{6}}$. 

\begin{theorem}\label{thm:classical_conjecture_in_twothird_intervals}
  Let $f$ be a full integer weight $k \geq 2$
	cusp form on $\SL_2(\mathbb{Z})$. Then
  \begin{equation*}
    \frac{1}{X^{\frac{2}{3}} (\log X)^{\frac{1}{6}}} \sum_{\lvert n - X \rvert < X^{\frac{2}{3}} (\log X)^{\frac{1}{6}}} \lvert S_f(n) \rvert^2 \ll X^{k-1 + \frac{1}{2}}.
  \end{equation*}
\end{theorem}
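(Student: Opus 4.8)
The plan is to encode the short-interval sum as a contour integral of the Dirichlet series
\[
  D(s) := \sum_{n \geq 1} \frac{\lvert S_f(n)\rvert^2}{n^{s+k-1}},
\]
to extract the expected main term as the residue at its rightmost pole, and then to bound what remains using the meromorphic continuation and vertical growth of $D(s)$ furnished by~\cite{hkldw}. Since the theorem asks only for an upper bound, I may work with a smooth majorant throughout.

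Write $H = X^{2/3}(\log X)^{1/6}$, fix a smooth function $V$ with $V \geq \mathbf{1}_{[-1,1]}$ and compact support, and set $g(x) = V((x-X)/H)$, so that
\[
  \sum_{\lvert n-X\rvert < H}\lvert S_f(n)\rvert^2 \leq \sum_{n\geq 1}\lvert S_f(n)\rvert^2 g(n).
\]
Mellin inversion converts the right-hand side into $\frac{1}{2\pi i}\int_{(\sigma)}\widetilde g(s)\,D(s-(k-1))\,ds$, where $\widetilde g$ is the Mellin transform of $g$ and $\sigma > k+\tfrac12$ lies to the right of the abscissa of convergence. Repeated integration by parts shows that $\widetilde g(\sigma+it)$ has size $\asymp X^{\sigma-1}H$ for $\lvert t\rvert \lesssim X/H$ and decays faster than any power of $\lvert t\rvert H/X$ beyond that range; thus the integral is effectively truncated to $\lvert t\rvert \lesssim X/H = X^{1/3}(\log X)^{-1/6}$, and this truncation scale is what ultimately dictates the interval length.

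Next I would move the contour to the left of the line $\Re s = k+\tfrac12$, crossing the single pole of $D(s-(k-1))$ there (the pole of $D$ at $s=\tfrac32$). The residue equals $\widetilde g(k+\tfrac12)\,\Res_{s=3/2}D(s)$, and since $\widetilde g(k+\tfrac12) = \int_0^\infty V((x-X)/H)\,x^{k-1/2}\,dx \asymp H X^{k-1/2}$, this contributes a main term of order $H X^{k-1/2}$, which is exactly $X^{k-1+\frac12}$ after dividing by $H$.

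The crux is to bound the shifted integral along a line $\Re s = \beta + (k-1)$ with $1 < \beta < \tfrac32$. Inserting $\lvert \widetilde g\rvert \asymp X^{\beta+k-2}H$ on the truncated range leaves $\ll X^{\beta+k-2}H\int_{\lvert t\rvert \lesssim X/H}\lvert D(\beta+it)\rvert\,dt$, and I would control the $t$-integral by Cauchy–Schwarz together with a second-moment (mean-square) bound for $D$ on the vertical line $\Re s = \beta$, which is the analytic information I expect to draw from~\cite{hkldw}. The size of $D$ in this strip grows as $\beta$ decreases, so there is a genuine tradeoff: pushing $\beta$ further left would shorten the interval but is obstructed by the growth of $D$ coming from the shifted-convolution/spectral expansion, while the residual logarithmic factors in the mean-square bound, once balanced against those already present in $H$, are precisely what produce the exponent $\tfrac16$ on $\log X$. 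Carrying out this optimization and verifying that the error stays $\ll HX^{k-1/2}$ at the optimal line is the main obstacle, and is where the improvement over the length $X^{3/4+\epsilon}$ of~\cite{jutila1987lectures} must be won.
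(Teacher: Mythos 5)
Your overall framework---encode the short sum via a Mellin transform against a kernel concentrated on $\lvert t\rvert \lesssim X/H$, extract the main term from the pole of $D$ at $s=\tfrac32$, and bound the remainder---is the same general philosophy as the paper (which uses the Gaussian kernel $\exp(\pi s^2/y^2)$ with $y \approx X/H$ in place of your bump function $V$; either kernel works, by positivity). The main term computation and the normalization via $D(s-(k-1))$ are fine.

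The genuine gap is the step you yourself flag as ``the main obstacle'': bounding $\int_{\lvert t\rvert\lesssim X/H}\lvert D(\beta+it)\rvert\,dt$ on a line $1<\beta<\tfrac32$ via ``a second-moment bound for $D$'' that you ``expect to draw from~\cite{hkldw}.'' No such bound exists in~\cite{hkldw}, and producing one is essentially equivalent to the whole theorem: $D$ is not an $L$-function with a functional equation, and its size in the strip $\tfrac12<\Re s<\tfrac53$ is controlled only through the decomposition of Proposition~\ref{prop:D_decomposition} and the spectral expansion of $Z(s,0,f\times f)$. The paper's argument is structurally different at exactly this point: it does \emph{not} stop on a line between $1$ and $\tfrac32$, but shifts all the way to $\Re s=-M_\delta$, showing the far-shifted integral is negligible (this is where $y\ll X^{1-\delta}$, $\delta>\tfrac12$ enters), so that the entire error is carried by residues. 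The decisive secondary term $X^{\frac12}y^2(\log y)^{\frac12}$ does not come from an $L^1$ or $L^2$ estimate of $D$ on a vertical line at all; it comes from the infinitely many poles at $s=\tfrac12-2m\pm it_j$ contributed by the discrete (Maass) spectrum, as in~\eqref{eq:MB_residue_discrete_pole}, summed over the spectrum using the Hoffstein--Hulse average bound~\eqref{eq:hoffsteinhulse_spectralbound}, together with the pole at $s=\tfrac32$ of the first residual term $\rho_{\frac32}(s-1)$ giving $X^{\frac32}/y$, and Li's subconvexity bound for $L(\mathrm{Sym}^2 f,\tfrac12+it)$ for the continuous spectrum. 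Balancing $X^{\frac32}/y$ against $X^{\frac12}y^2(\log y)^{\frac12}$ is what yields $y=X^{\frac13}(\log X)^{-\frac16}$ and hence the interval length $X^{\frac23}(\log X)^{\frac16}$. Without identifying these specific inputs, your proposed optimization over $\beta$ has no mechanism to produce that exponent, and the claimed source of the $\tfrac16$ power of $\log$ (a ``residual logarithmic factor in the mean-square bound'') does not correspond to anything provable by the available technology.
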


In~\cite{hkldw}, the authors investigated the behaviour of the second moment of the partial sums, $S_f(n)$, of the Fourier coefficients.
The authors provided meromorphic continuation for the Dirichlet series,
\begin{equation*}
  D(s, S_f \times S_f) := \sum_{n \geq 1} \frac{\lvert S_f(n) \rvert^2}{n^{s + k - 1}},
\end{equation*}
and used this continuation to prove smoothed forms and generalizations of~\eqref{eq:CN_classical_long_intervals}. In this paper, we build upon the meromorphic continuation of $D(s, S_f \times S_f)$ from~\cite{hkldw} and apply a concentrating integral transform. Indeed, this result demonstrates how an understanding of the analytic properties of $D(s, S_f \times S_f)$ can produce new avenues for investigating the behaviour of $S_f(n)$.

\section{Decomposition and Outline of Proof}

In this section, we decompose $D(s, S_f \times S_f)$ into pieces we can understand and introduce the concentrating integral transform that leads to our main theorem.
The majority of this paper is dedicated towards bounding contributions from each part of this decomposition.
We outline the main argument of the paper, leaving proofs to later sections.

From Section~3 of~\cite{hkldw}, we have the following decomposition of $D(s, S_f \times S_f)$.
\begin{proposition}\label{prop:D_decomposition}
  Let $\displaystyle f(z) = \sum_{n\geq 1} a(n)e(nz)$ be a weight $k$ cusp form. Then
  \begin{align*}
    &D(s, S_f \times S_f) := \sum_{n \geq 1} \frac{\lvert S_f(n) \rvert^2}{n^{s + k - 1}} \nonumber \\
    &\quad= W(s; f,f) + \frac{W(s-1; f,f)}{s + k - 2}\\
    &\qquad + \frac{1}{2\pi i} \int_{(\sigma_z)} W(s-z; f,f) \zeta(z) \frac{\Gamma(z) \Gamma(s + k - 1 - z)} {\Gamma(s + k - 1)}dz, \\
  \end{align*}
  where $\Re \sigma_z \in (0,1)$, $\Re s$ is sufficiently positive, and
  \begin{align*}
    W(s; f,f) :\!\! &= \frac{L(s, f\times f)}{\zeta(2s)} + Z(s, 0, f\times f),
  \end{align*}
  in which $Z(s, w, f\times f)$ is the symmetrized shifted convolution sum
  \begin{equation*}
    Z(s, w, f\times f) := \sum_{n, h \geq 1} \frac{a(n) \overline{a(n-h)}+a(n-h)\overline{a(n)}}{n^{s + k - 1}h^w}.
  \end{equation*}
\end{proposition}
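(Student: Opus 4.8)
The plan is to open the square, write $|S_f(n)|^2 = \sum_{m_1, m_2 \leq n} a(m_1)\overline{a(m_2)}$, and interchange the order of summation. For $\Re s$ sufficiently large, Deligne's bound $a(m) \ll m^{(k-1)/2 + \epsilon}$ guarantees absolute convergence of the resulting double sum, so that, writing $s' = s + k - 1$,
\[
  D(s, S_f \times S_f) = \sum_{m_1, m_2 \geq 1} a(m_1)\overline{a(m_2)} \sum_{n \geq \max(m_1, m_2)} \frac{1}{n^{s'}}.
\]
Setting $M = \max(m_1, m_2)$, the entire problem reduces to finding a representation of the tail $\sum_{n \geq M} n^{-s'}$ in which the $M$-dependence factors cleanly through powers $M^{z - s'}$, since those powers will reassemble into copies of $W$ after summing over the pairs.

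First I would isolate the term $n = M$, which contributes exactly $M^{-s'}$, and treat the remainder $\sum_{j \geq 1}(M+j)^{-s'}$ using the Mellin--Barnes (Beta-function) identity
\[
  (1+u)^{-s'} = \frac{1}{2\pi i}\int_{(c)} \frac{\Gamma(z)\Gamma(s'-z)}{\Gamma(s')} u^{-z}\, dz, \qquad 0 < c < \Re s',
\]
applied with $u = j/M$. Choosing $1 < c < \Re s'$, the sum over $j$ then produces $\zeta(z)$, giving
\[
  \sum_{n \geq M} \frac{1}{n^{s'}} = M^{-s'} + \frac{1}{2\pi i}\int_{(c)} M^{z-s'}\,\zeta(z)\,\frac{\Gamma(z)\Gamma(s'-z)}{\Gamma(s')}\, dz.
\]
Summing back over $m_1, m_2$, I would recognize that $\sum_{m_1, m_2} a(m_1)\overline{a(m_2)} M^{z-s'} = W(s-z; f,f)$: the diagonal $m_1 = m_2$ yields $L(s-z, f\times f)/\zeta(2(s-z))$ by Rankin--Selberg, while the off-diagonal yields $Z(s-z, 0, f\times f)$. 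Thus the isolated $n = M$ terms assemble into $W(s; f,f)$, and the tail integral becomes $\frac{1}{2\pi i}\int_{(c)} W(s-z; f,f)\zeta(z)\frac{\Gamma(z)\Gamma(s'-z)}{\Gamma(s')}\, dz$.

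Finally I would shift the contour from $\Re z = c \in (1, \Re s')$ to $\Re z = \sigma_z \in (0,1)$. The only pole crossed is the simple pole of $\zeta(z)$ at $z = 1$, whose residue contributes $W(s-1; f,f)\,\Gamma(1)\Gamma(s'-1)/\Gamma(s') = W(s-1; f,f)/(s+k-2)$, producing the middle term of the stated decomposition. The main obstacle is the bookkeeping of convergence and the legitimacy of the several interchanges of summation and integration --- swapping the $j$-sum and the $(m_1, m_2)$-sum past the $z$-integral, and the contour shift itself. All of these are controlled by Stirling's formula: the factor $\Gamma(z)\Gamma(s'-z)$ decays exponentially in $|\Im z|$ and so dominates the polynomial growth of $\zeta(z)$ and of $W(s-z; f,f)$ along vertical lines. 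For $\Re s$ large and $\Re z$ bounded, $\Re(s-z)$ stays within the region of absolute convergence of $W$, so the horizontal segments vanish in the shift and every sum and integral converges absolutely; the meromorphic continuation in $s$ then follows from the continuations of $W$ and the Gamma factors.
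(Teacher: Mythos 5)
Your proposal is correct and reconstructs essentially the argument the paper relies on (the proposition is quoted from Section~3 of~\cite{hkldw}, where the derivation proceeds exactly this way): open the square, interchange to get $\sum_{n\ge\max(m_1,m_2)}n^{-(s+k-1)}$, expand the tail via the Barnes integral for $(1+u)^{-s'}$ so that the $\max(m_1,m_2)$-dependence reassembles into $W(s-z;f,f)$, and shift the $z$-contour past the pole of $\zeta(z)$ at $z=1$ to produce the middle term $W(s-1;f,f)/(s+k-2)$. The convergence and Fubini justifications you sketch (Deligne's bound for $\Re s$ large, Stirling decay of $\Gamma(z)\Gamma(s'-z)$ on vertical lines) are the right ones.
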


Here and throughout, we maintain the notation from~\cite{hkldw}, which largely reflects the notation in~\cite{HoffsteinHulse13}.
The analytic properties of $D(s, S_f \times S_f)$ are given by the analytic properties of the Rankin-Selberg convolution $L(s, f\times f)$ and the shifted convolution sum $Z(s, w, f\times f)$. We understand $L(s, f\times f)$ through the well-known relation
\begin{equation*}
  L(s, f\times f) = \frac{(4\pi)^{s + k - 1} \zeta(2s)}{\Gamma(s + k - 1)} \left \langle \lvert f \rvert^2 \Im(\cdot)^k, E(\cdot, \overline{s}) \right \rangle,
\end{equation*}
where $E(z,s)$ is the real analytic Eisenstein series
\begin{equation*}
  E(z,s) := \sum_{\gamma \in \Gamma_\infty \backslash \SL_2(\mathbb{Z})} \Im(\gamma z)^s.
\end{equation*}
We understand $Z(s, w, f\times f)$ through meromorphic continuation of its spectral expansion.
Let $\{\mu_j(z) : j \geq 0\}$ be an orthonormal basis of weight zero Maass eigenforms for the residual and cuspidal spaces of $\SL_2(\mathbb{Z}) \backslash \mathbb{H}$. This basis consists of the constant function $\mu_0(z)$ and infinitely many Maass cusp forms $\mu_j(z)$, with $j \geq 1$. To these cusp forms we associate eigenvalues $\tfrac{1}{4} +t_j^2$ with respect to the hyperbolic Laplacian, and Fourier expansions
\[
  \mu_j(z)=\sum_{n \neq 0} \rho_j(n)y^{\frac{1}{2}}K_{it_j}(2\pi \vert n \vert y)e^{2\pi i n x}.
\]
We may assume as well that each $\mu_j$ is a simultaneous eigenfunction of the Hecke operators, including the $T_{-1}$ operator which has action $T_{-1} \mu_j(x+iy) = \mu_j(-x + iy)$, as described in~\cite[Theorem~3.12.6]{Goldfeld2006automorphic}.

Following \cite{hkldw}, let
\begin{equation*}
  \mathcal{V}_{f,f}(z) = \Im(z)^k(\vert f(z) \vert^2 + T_{-1}(\vert f(z) \vert^2)).
\end{equation*}
The following Proposition gives the spectral expansion of $Z(s,w,f\times f)$ in terms of the Eisenstein series and the Maass eigenforms defined above.

\begin{proposition}[From Section~4 of~\cite{hkldw}]\label{prop:Z_meromorphic_continuation}
In the region $\Re s > \frac{1}{2}$ and $Re (s + w) > \frac{3}{2}$, the shifted convolution sum $Z(s,w, f\times f)$ can be expressed as
  \begin{align}
    Z(s,& w, f\times f) =\nonumber\\
    &\frac{(4\pi)^k}{2} \sum_j \rho_j(1) G(s, it_j) L(s+w - \tfrac{1}{2}, \mu_j) \langle \mathcal{V}_{f,f}, \mu_j \rangle \label{line:definition_discrete_spectrum}\\
    &+ \frac{(4\pi)^k}{4 \pi i} \int_{(0)} G(s,z) \mathcal{Z}(s,w,z) \langle \mathcal{V}_{f,f}, \overline{E(\cdot, \tfrac{1}{2} - z)}\rangle dz. \label{line:definition_cont_spectrum}
  \end{align}
  We call~\eqref{line:definition_discrete_spectrum} the discrete spectrum component and~\eqref{line:definition_cont_spectrum} the continuous spectrum component. Here, $G(s,z)$ and $\mathcal{Z}(s,w,z)$ are the collected gamma and zeta factors of the discrete and continuous spectra,
  \begin{align*}
    G(s,z) &:= \frac{\Gamma(s-\frac{1}{2}+z)\Gamma(s-\frac{1}{2}-z)}{\Gamma(s)\Gamma(s+k-1)}, \\
    \mathcal{Z}(s,w,z) &:= \frac{\zeta(s+w-\frac{1}{2}+z)\zeta(s+w-\frac{1}{2}-z)}{\zeta^*(1+2z)},
  \end{align*}
in which $\zeta^*(2z):=\pi^{-z} \Gamma(z)\zeta(2z)$ is the completed zeta function. The convolution $Z(s, 0, f\times f)$ has a meromorphic continuation to the whole plane given by the separate meromorphic continuations of its discrete and continuous spectra.
  \end{proposition}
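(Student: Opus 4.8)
The plan is to realize $Z(s,w,f\times f)$ as a spectral average, following the Poincar\'e-series approach of Hoffstein--Hulse~\cite{HoffsteinHulse13}. First I would fix the shift $h$ and treat the inner sum $\sum_n a(n)\overline{a(n-h)}\, n^{-(s+k-1)}$ (together with its conjugate) as the unfolding of an inner product of $\mathcal{V}_{f,f}(z)$ against a suitable weight-zero Poincar\'e series $P_h(z,s)$, whose $m$-th Fourier coefficient is supported on $m=h$ and carries a $K$-Bessel profile in the $y$-variable. Unfolding $\langle \mathcal{V}_{f,f}, P_h\rangle$ over $\Gamma_\infty\backslash\mathbb{H}$ collapses the coset sum and reproduces the shifted convolution with shift $h$; the Mellin transform of the $K$-Bessel factor against $y^{s+k-1}$ is what produces the ratio of gamma factors $G(s,z)$.

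Next I would apply the Selberg--Roelcke spectral theorem to expand $P_h$ (equivalently, to project $\mathcal{V}_{f,f}$) onto the constant function, the Hecke--Maass cusp forms $\mu_j$, and the Eisenstein series $E(\cdot,\tfrac{1}{2}-z)$ on the critical line. The $h$-th spectral coefficient pairs with $\rho_j(h)$ in the discrete part and with the divisor-type coefficient of the Eisenstein series in the continuous part; summing over $h\geq 1$ against $h^{-w}$ recombines these Fourier coefficients into the Hecke $L$-function $L(s+w-\tfrac{1}{2},\mu_j)$ and into the quotient of zeta values $\mathcal{Z}(s,w,z)$, respectively. The projections $\langle \mathcal{V}_{f,f},\mu_j\rangle$ and $\langle\mathcal{V}_{f,f},\overline{E(\cdot,\tfrac{1}{2}-z)}\rangle$ remain as the arithmetic weights, and collecting the gamma factors yields exactly the two lines of the claimed formula, valid initially in the stated region $\Re s>\tfrac{1}{2}$, $\Re(s+w)>\tfrac{3}{2}$, where every step converges absolutely.

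For the meromorphic continuation to $w=0$ and then to all of $s$, I would continue the discrete and continuous components separately. The discrete sum converges in a half-plane and continues because $G(s,it_j)$ decays exponentially in $t_j$ by Stirling's formula, while $\rho_j(1)$, the central $L$-values, and the coefficients $\langle\mathcal{V}_{f,f},\mu_j\rangle$ grow at most polynomially; the only new singularities come from the poles of the gamma factors in $G(s,z)$. The continuous integral is continued by shifting the contour $\Re z=0$ and tracking the poles of the two zeta functions $\zeta(s+w-\tfrac{1}{2}\pm z)$ in $\mathcal{Z}(s,w,z)$ together with the pole coming from $1/\zeta^*(1+2z)$; the residues crossed furnish the polar structure of $Z(s,0,f\times f)$.

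The main obstacle is the analytic bookkeeping in this last step: one must justify the contour shifts uniformly and verify that the shifted integral still converges (again using exponential decay of $G(s,z)$ in $\Im z$), while keeping careful track of which poles of $\mathcal{Z}(s,w,z)$ are crossed as $s+w$ varies, since these residues---rather than the spectral sum itself---carry the dominant arithmetic information. Establishing the requisite polynomial control on $\langle\mathcal{V}_{f,f},\mu_j\rangle$ as $t_j\to\infty$, via repeated integration against the Laplacian and the smoothness and rapid decay of the cusp-form data $\mathcal{V}_{f,f}$, is the technical heart that makes both the spectral expansion and its continuation rigorous.
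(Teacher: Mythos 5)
The paper does not actually prove this proposition; it imports it verbatim from Section~4 of~\cite{hkldw}, and the argument there (following~\cite{HoffsteinHulse13}) is precisely the Poincar\'e-series unfolding plus spectral expansion plus contour-shift continuation that you outline, so your architecture is the right one. There is, however, one genuine gap in your justification of convergence of the discrete spectrum: you assert that $\rho_j(1)$ and $\langle \mathcal{V}_{f,f},\mu_j\rangle$ each grow at most polynomially in $t_j$, and that the needed polynomial control on $\langle \mathcal{V}_{f,f},\mu_j\rangle$ follows from repeated integration against the Laplacian. In fact $\rho_j(1)$ grows like $e^{\pi t_j/2}$ up to polynomial factors (Hoffstein--Lockhart), while $\langle \mathcal{V}_{f,f},\mu_j\rangle$ \emph{decays} exponentially at the matching rate; neither is polynomially bounded on its own, and Laplacian integration by parts only produces polynomial decay, which does not by itself tame $\rho_j(1)$. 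The estimate that actually does the work --- and the genuinely hard input, proved as Proposition~4.3 of~\cite{HoffsteinHulse13} and quoted in this paper as~\eqref{eq:hoffsteinhulse-bound} --- is that the \emph{product} $\rho_j(1)\langle \mathcal{V}_{f,f},\mu_j\rangle$ is polynomially bounded on average over $\lvert t_j\rvert \sim T$. Your sketch can be repaired, since $G(s,it_j)$ supplies $e^{-\pi \lvert t_j\rvert}$ decay once $\lvert t_j\rvert \gg \lvert \Im s\rvert$, which absorbs the exponential growth of $\rho_j(1)$; but as written the convergence argument is incorrect and conceals the one nontrivial bound. A smaller imprecision: in the continuation of the continuous spectrum, the residual terms $\rho_{\frac{3}{2}-m}(s)$ with $m \geq 1$ in~\eqref{eq:jth_residual} come from the poles of the Gamma factors in $G(s,z)$, not from $1/\zeta^*(1+2z)$, whose zeros and poles near the contour play no role in the shifts actually performed.
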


Specializing to $w = 0$, the discrete spectrum component has a meromorphic continuation to the plane given by the analytic continuations of $L(s, \mu_j)$.
  The continuous spectrum also has a meromorphic continuation, but it is significantly more involved due to interaction between poles in $s$ and the Mellin integral in $z$.
  The meromorphic continuation of the continuous spectrum component can be written as
  \begin{equation} \label{eq:meromorphic_continuation}
    \frac{(4\pi)^k}{4\pi i} \int_{(0)} G(s,z) \mathcal{Z}(s,0,z) \langle \mathcal{V}_{f,f}, \overline{E(\cdot, \tfrac{1}{2} - z)}\rangle dz + \sum_{0 \leq m < \frac{3}{2} - \Re s} \rho_{\frac{3}{2} - m}(s),
  \end{equation}
  where
  \begin{equation}\label{eq:zeta_residual}
    \rho_{\frac{3}{2}}(s) = \frac{(4\pi)^k \zeta(2s - 2) \Gamma(2s - 2)}{\Gamma(s) \Gamma(s + k - 1) \zeta^*(2s - 2)} \langle \mathcal{V}_{f,f}, \overline{E(\cdot, 2 - s)}\rangle
  \end{equation}
  denotes the residual term coming from the apparent pole of the zeta functions in $\mathcal{Z}(s,0,z)$, and for $m \geq 1$,
	\begin{equation}
  \begin{split}
      \rho_{\frac{3}{2} - m}(s) &= \frac{(-1)^{m-1} (4\pi)^k \zeta(1-m) \zeta(2s + m-2) \Gamma(2s + m-2)}{\Gamma(m) \Gamma(s) \Gamma(s + k - 1) \zeta^*(4 - 2s - 2m)} \times \\
      &\qquad \big\langle \mathcal{V}_{f,f}, \overline{E(\cdot, s + m-1)}\big\rangle
    \end{split}\label{eq:jth_residual}
  \end{equation}
  denotes residual terms coming from apparent poles of the Gamma functions in $G(s,z)$.
  Each residual term $\rho_{\frac{3}{2} - m}(s)$ appears only when $\Re s < \tfrac{3}{2} - m$.
  For example, the first residual term, $\rho_{\frac{3}{2}}(s)$, appears when $\Re s < \frac{3}{2}$, and the next residual term, $\rho_{\frac{1}{2}}(s)$, appears when $\Re s < \frac{1}{2}$. When $Re(s)=\frac{3}{2}-m$, the line of integration in \eqref{eq:meromorphic_continuation} is bent so that it does not pass through poles of the integrand. 

We use the Mellin transform,
\begin{equation}\label{eq:integral_transform}
  \frac{1}{2\pi i} \int_{(2)} \exp{\left( \frac{\pi s^2}{y^2}\right)} \frac{X^s}{y} ds = \frac{1}{2\pi} \exp\left( -\frac{y^2 \log^2 X} {4\pi} \right),
\end{equation}
for $X \gg 1$ and $y>0$, to concentrate sums around the coefficients of $D(s, S_f \times S_f)$ in an interval.
In Section~\ref{sec:integral_transform} we prove~\eqref{eq:integral_transform} and investigate a family of related Mellin transforms.
Note that
\begin{align}
  &\frac{1}{2\pi i} \int_{(\sigma_s)} D(s, S_f \times S_f) \exp \left( \frac{\pi s^2}{y^2} \right) \frac{X^s}{y} ds \label{line1:integral_transform} \\
  &\quad= \frac{1}{2\pi} \sum_{n \geq 1} \frac{\lvert S_f(n) \rvert^2}{n^{k-1}} \exp\left( - \frac{y^2 \log^2 (X/n)}{4\pi} \right), \label{line2:integral_transform}
\end{align}
when $\Re \sigma_s$ is large enough such that $D(s, S_f \times S_f)$ converges absolutely.
Taking $\Re \sigma_s > 3$ suffices.
For $n \in [X - X/y, X + X/y]$, the exponential damping term in line~\eqref{line2:integral_transform} is almost constant.
But for $n$ with $\lvert n - X \rvert > X/y^{1-\epsilon}$, the damping term contributes exponential decay.
So the main contribution to the integral in~\eqref{line1:integral_transform} concentrates around those coefficients in an interval around $X$.
Further, by the positivity of the coefficients,
\begin{equation}\label{eq:goal_inequality}
  \sum_{\lvert n - X \rvert < X/y} \frac{\lvert S_f(n) \rvert^2}{n^{k-1}} \ll \frac{1}{2\pi i} \int_{(\sigma_s)} D(s, S_f \times S_f) \exp \left( \frac{\pi s^2}{y^2} \right) \frac{X^s}{y} ds.
\end{equation}

We split $D(s, S_f \times S_f)$ into the three components given by the decomposition in Proposition~\ref{prop:D_decomposition} and bound the integral transform on the right hand side of~\eqref{eq:goal_inequality} separately for each piece.
Our primary goal then becomes to bound each of the components below,
\begin{align}
  &\frac{1}{2\pi i} \int_{(\sigma_s)} W(s; f,f) \exp \left( \frac{\pi s^2}{y^2} \right) \frac{X^s}{y} ds \label{line1:decomposition} \\
  &+ \frac{1}{2\pi i} \int_{(\sigma_s)} \frac{W(s-1; f,f)}{s + k - 2} \exp \left( \frac{\pi s^2}{y^2} \right) \frac{X^s}{y} ds \label{line2:decomposition} \\
  \begin{split}
    &+ \frac{1}{(2\pi i)^2} \int_{(\sigma_s)}\int_{(\sigma_z)} W(s-z; f\times f)\zeta(z) \frac{\Gamma(z)\Gamma(s + k - 1 - z)}{\Gamma(s + k - 1)}  \label{line3:decomposition} \\
&\mkern 200mu \times\exp \left( \frac{\pi s^2}{y^2} \right) \frac{X^s}{y} dz ds. \end{split}
\end{align}
For the first term~\eqref{line1:decomposition}, coming from $W(s; f,f)$, it is very straightforward to produce the following upper bound.
\begin{lemma}\label{lem:Wsff_upper_bound}
  \begin{equation*}
    \int_{(\sigma_s)} W(s; f,f) \exp\left(\frac{\pi s^2}{y^2}\right) \frac{X^s}{y}ds \ll_\epsilon \frac{X^{\frac{4}{3} + \epsilon}}{y}.
  \end{equation*}
\end{lemma}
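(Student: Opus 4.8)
The plan is to sidestep all contour-shifting and instead work directly on a line of absolute convergence, where the positivity and an explicit Dirichlet-series expansion of $W$ make everything transparent. First I would collapse the two pieces of $W(s;f,f)$ into a single Dirichlet series. Using the standard unfolding identity $L(s,f\times f)/\zeta(2s) = \sum_{n\ge 1}|a(n)|^2 n^{-(s+k-1)}$ together with the definition of the symmetrized shifted sum $Z(s,0,f\times f)$, one gets
\[
  W(s;f,f) = \sum_{n\ge 1} \frac{b_n}{n^{s+k-1}}, \qquad b_n := |a(n)|^2 + a(n)\overline{S_f(n-1)} + \overline{a(n)}\,S_f(n-1),
\]
valid for $\Re s > 2$, where $Z(s,0,f\times f)$ converges absolutely. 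One can check $b_n = |S_f(n)|^2 - |S_f(n-1)|^2$, but the displayed form is the one I would use to bound, since it exposes a single factor of $a(n)$.

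Next I would apply the Mellin transform~\eqref{eq:integral_transform} termwise. Because the integrand $\exp(\pi s^2/y^2)(X/n)^s/y$ is entire and decays superpolynomially along vertical lines, its integral is independent of the abscissa, so on the absolutely convergent line $(\sigma_s)$ I may interchange summation and integration to obtain
\[
  \int_{(\sigma_s)} W(s;f,f)\exp\!\Big(\tfrac{\pi s^2}{y^2}\Big)\frac{X^s}{y}\,ds \;\ll\; \sum_{n\ge 1}\frac{|b_n|}{n^{k-1}}\exp\!\Big(-\frac{y^2\log^2(X/n)}{4\pi}\Big).
\]
The exponential factor concentrates the sum near $n = X$ and decays superpolynomially away from it, so I would partition the $n$-range into the shells $R_j = \{\,n : jX/y \le |n-X| < (j+1)X/y\,\}$ for $j \ge 0$. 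On $R_j$ one has $\log(X/n) \gg j/y$, hence the Gaussian is $\ll \exp(-cj^2)$ for an absolute $c>0$, and each shell contains $\ll X/y$ integers; the contribution of $n$ far from $X$ (where $\log(X/n)$ is bounded below) is negligible by the same decay.

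The only arithmetic input is a pointwise bound on $b_n$. Combining Deligne's bound $a(n) \ll n^{(k-1)/2+\epsilon}$ with the individual estimate $S_f(n) \ll n^{(k-1)/2 + 1/3 + \epsilon}$ from~\eqref{eq:one_third_with_epsilon}, the dominant cross term yields $|b_n| \ll n^{(k-1)/2+\epsilon}\cdot n^{(k-1)/2+1/3+\epsilon} \ll n^{k-1+1/3+\epsilon}$, so $|b_n|/n^{k-1} \ll X^{1/3+\epsilon}$ throughout the relevant range. Each shell then contributes $\ll (X/y)\,X^{1/3+\epsilon}\exp(-cj^2) = (X^{4/3+\epsilon}/y)\exp(-cj^2)$, and summing the convergent series $\sum_{j\ge 0}\exp(-cj^2) = O(1)$ gives the claimed $\ll_\epsilon X^{4/3+\epsilon}/y$. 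I expect essentially no obstacle: the whole point is that remaining on the line of absolute convergence avoids any discussion of the growth or poles of $W$ on vertical lines, and the exponent $4/3$ is simply (number of terms $X/y$) times (size of each coefficient $X^{1/3}$). The only care needed is in justifying the termwise transform and confirming that the superpolynomial Gaussian tail renders the shells with $j \gg 1$ negligible, both of which are routine given the entire, rapidly decaying integrand.
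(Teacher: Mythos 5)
Your proposal is correct and is essentially the paper's own proof: the paper likewise rewrites $W(s;f,f)$ as the Dirichlet series $\sum_n w(n) n^{-(s+k-1)}$ with $w(n) = a(n)\overline{S_f(n)} + \overline{a(n)}S_f(n) - |a(n)|^2$ (identical to your $b_n$), bounds $w(n) \ll n^{k-1+\frac{1}{3}+\epsilon}$ via Deligne and the Hafner--Ivi\'{c} estimate, and applies the Mellin transform termwise on a line of absolute convergence. The only difference is that you make explicit the shell decomposition behind the final estimate $\sum_n n^{\frac{1}{3}+\epsilon}\exp(-y^2\log^2(X/n)/4\pi) \ll X^{\frac{4}{3}+\epsilon}/y$, which the paper leaves as a routine step.
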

\begin{proof}
  See Section~\ref{sec:Wsff}.
\end{proof}

The third term, coming from~\eqref{line3:decomposition}, the $z$-integral of $W(s-z; f,f)$,
requires more attention.
We fix $\delta > 0$ and introduce the assumption that $y \ll X^{1-\delta}$, eventually taking $\delta$ near $\frac{2}{3}$.
Under this assumption, we are able to shift our lines of integration for both the $z$ and $s$ integrals in a way that minimizes our final estimates.

To be specific, with $y \ll X^{1-\delta}$, it becomes advantageous to shift $\sigma_s$ to be very negative and $\sigma_z$ to be very positive even though these shifts pass over several poles.
The number of residual contributions from these poles depends on the choice of $\delta$, and the resulting implicit constant might be very large in $\delta$.
Ultimately, we are able to show that the contribution from \eqref{line3:decomposition}
is no larger than that seen in Lemma~\ref{lem:Wsff_upper_bound}, up to implicit dependence on $\delta$.

\begin{lemma}\label{lem:MellinBarnesWsff_upper_bound}
  \begin{align*}
    \begin{split}
      \int_{(\sigma_s)}\int_{(\epsilon)} &W(s-z; f,f) \zeta(z) B(s+k-1-z,z) \exp\left(\frac{\pi s^2}{y^2}\right) \frac{X^s}{y} dzds \\
      &\ll_{\delta, \epsilon} \frac{X^{\frac{4}{3} + \epsilon}}{y}
    \end{split}
  \end{align*}
  where $B(s,z)$ is the Beta function
  \begin{equation*}
    B(s,z) = \frac{\Gamma(z) \Gamma(s)}{\Gamma(s+z)}.
  \end{equation*}
\end{lemma}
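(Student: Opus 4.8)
The plan is to bound the double integral by pushing both contours into regions where the integrand is small, carefully accounting for every residue crossed, and the engine that makes this profitable is the standing hypothesis $y \ll X^{1-\delta}$. On a vertical line $\Re s = \sigma$ one has $\lvert \exp(\pi s^2/y^2)\rvert = \exp(\pi(\sigma^2 - t^2)/y^2)$ with $t = \Im s$, so the Gaussian supplies decay $\exp(-\pi t^2/y^2)$ in the imaginary direction — confining the $s$-integral to $\lvert \Im s\rvert \ll y(\log X)^{1/2}$ — while costing only $\exp(\pi\sigma^2/y^2) = 1 + o(1)$ along the real axis for any fixed $\sigma$, since $y \ll X^{1-\delta}$. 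The competing factor $X^s$ contributes $X^\sigma$, which shrinks as $\sigma$ moves left because $X \gg 1$. Thus moving $\sigma_s$ leftward is advantageous, up to the residues we collect along the way.

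First I would record the analytic input from~\cite{hkldw}: the locations and residues of the poles of $W(\,\cdot\,; f,f)$ — whose rightmost pole has real part at most $1$ — together with convexity-type bounds for $L(s, f\times f)/\zeta(2s)$ and for the shifted convolution $Z(s,0,f\times f)$ on vertical lines, including the polynomial growth in the imaginary direction forced by the Rankin–Selberg functional equation as the real part tends to $-\infty$. I would also record the Stirling asymptotics of the Beta factor: with $t = \Im s$ and $v = \Im z$, $B(s+k-1-z, z)$ decays like $\exp\bigl(-\tfrac{\pi}{2}(\lvert t - v\rvert + \lvert v\rvert - \lvert t\rvert)\bigr)$ up to polynomial factors, so it is of polynomial size only when $0 \le v \le t$ (and its reflection) and otherwise decays exponentially. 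Together with the Gaussian this confines the effective integration to $\lvert \Im z\rvert \ll \lvert \Im s\rvert \ll y(\log X)^{1/2}$.

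Next I would shift $\sigma_s$ leftward while holding $z$ on $\Re z = \epsilon$, moving from the initial value $> 3$ to a negative value $\sigma_s^\ast = \sigma_s^\ast(\delta)$. This crosses only the poles of $W(s-z; f,f)$, located at $s = z + u_0$ for $u_0$ a pole of $W$; since $\Re u_0 \le 1$, every crossing occurs at $\Re s \le 1 + \epsilon$ and leaves a single $z$-integral of size $\ll_\delta X^{1+\epsilon}/y \ll X^{4/3}/y$, convergence being supplied by the $\zeta$, Beta, and Gaussian factors. The number of these residues grows with $\lvert \sigma_s^\ast\rvert$, which is the source of the dependence of the implied constant on $\delta$.

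With $s$ now on $\Re s = \sigma_s^\ast$, I would shift $\sigma_z$ rightward to a large value $\sigma_z^\ast$, crossing the pole of $\zeta(z)$ at $z = 1$, further poles of $W(s-z; f,f)$, and the poles of $\Gamma(s+k-1-z)$ in the Beta factor. Every residue from this stage, as well as the fully shifted double integral over $(\sigma_s^\ast, \sigma_z^\ast)$, sits on the far-left contour $\Re s = \sigma_s^\ast$, where $X^{\sigma_s^\ast}$ is small. This is the step I expect to be the main obstacle. On these far-left lines $\Re(s - z) \to -\infty$, the functional equation makes $\lvert W(s-z; f,f)\rvert$ grow polynomially in the imaginary direction, and integrating that growth against the Gaussian introduces positive powers of $y$; the hypothesis $y \ll X^{1-\delta}$ is exactly what keeps those powers in check, and balancing them against the smallness of $X^{\sigma_s^\ast}$ — while optimizing $\sigma_s^\ast$, equivalently the number of residues crossed — is what pins the exponent at $X^{4/3+\epsilon}/y$. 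In particular the $z = 1$ residue reproduces $\int_{(\sigma_s^\ast)} \frac{W(s-1; f,f)}{s + k - 2}\exp(\pi s^2/y^2)\frac{X^s}{y}\,ds$ on a contour to the left of $s = \tfrac{3}{2}$, so it does \emph{not} recreate the main term $X^{3/2}/y$, which is supplied entirely by the separately treated term~\eqref{line2:decomposition}. The conclusion is that the additional Mellin–Barnes averaging of $W$ against $\zeta(z)$ and the Beta factor does not degrade the single-integral bound of Lemma~\ref{lem:Wsff_upper_bound}.
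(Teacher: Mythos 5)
Your contour strategy diverges from the paper's and, as described, it runs into an obstruction that the paper's proof is specifically engineered to avoid. You propose to shift $\sigma_s$ far to the left while holding $\Re z = \epsilon$ fixed (and later to push $\sigma_z$ far to the right), which forces $W(s-z;f,f)$ to be evaluated with $\Re(s-z)$ arbitrarily negative. Deep in its continued region, $W$ is not a function with a few real poles of real part at most $1$: its discrete spectrum contributes infinitely many poles at $s - z = \tfrac{1}{2} - 2m \pm it_j$, one for each Maass form $\mu_j$, so each unit of leftward shift collects an infinite spectral sum of residues that must be controlled by the Hoffstein--Hulse bound~\eqref{eq:hoffsteinhulse_spectralbound} as in Section~\ref{ssec:Wsff_residue_discrete} --- your claim that each crossing ``leaves a single $z$-integral of size $\ll X^{1+\epsilon}/y$'' elides all of this. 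Worse, the residual terms $\rho_{\frac{3}{2}-m}$ in the continuation of $W$ involve uncompleted Eisenstein series with poles at points $1 + \tfrac{\rho}{2} - m$ for every nontrivial zero $\rho$ of $\zeta$; the paper states explicitly that these terms cannot be shifted further left without progress toward the Riemann Hypothesis, so your target contour $\Re s = \sigma_s^\ast$ is not reachable by the route you describe, and the ``small'' shifted copy of $\int W(s-1;f,f)/(s+k-2)\cdots ds$ that you invoke for the $z=1$ residue is not a legitimately available object.

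The paper's proof never leaves the half-plane of absolute convergence of $W$. It first applies the asymmetric functional equation~\eqref{eq:zeta_asymmetric_functional_equation} to convert $\zeta(z)\Gamma(z)$ into $\zeta(1-z)\sec(\tfrac{\pi z}{2})(2\pi)^z/2$, then shifts \emph{both} contours in tandem with $\sigma_s - \sigma_z = \tfrac{3}{2} + \epsilon$ held fixed, moving $\sigma_z$ to $-\epsilon - M_\delta$. The only poles crossed are at $z=0$ (whose residue is the integral of Lemma~\ref{lem:Wsff_upper_bound}, giving $X^{4/3+\epsilon}/y$) and at negative odd integers from the secant, whose residues are bounded elementarily via the derivative transforms of Lemma~\ref{lem:transform_general_j} and the coefficient bound $w(n) \ll n^{k-1+\frac{1}{3}+\epsilon}$, contributing only $O(X^{1/3+\epsilon})$; the fully shifted double integral is then killed by $X^{-M_\delta}$ against powers of $y$ using $y \ll X^{1-\delta}$. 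If you want to salvage your approach you would essentially have to import the entire analysis of Section~\ref{sec:Wsff_Mellin_Barnes_Residue} (spectral sums, residual terms, and the zeta-zero obstruction) into this lemma with an extra $z$-integral attached; the tandem shift exists precisely so that none of that is necessary here.
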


\begin{proof}
  See Section~\ref{sec:Wsff_Mellin_Barnes_Integral}.
\end{proof}

For the term~\eqref{line2:decomposition},
coming from $W(s-1; f,f)/(s+k-2)$, we proceed with extreme care as
this term dominates the previous two. It is necessary to analyze $W(s; f,f)$ in two smaller pieces.

In Section~\ref{ssec:Wsff_residue_continuous} we consider the contribution from $L(s, f\times f) \zeta(2s)^{-1}$ and the continuous spectrum of $Z(s, 0, f\times f)$, which we group together to take advantage of the remarkable cancellation of polar terms.
In our methodology, it becomes necessary to further assume that $y \ll X^{1 - \delta}$ with $\delta > \tfrac{1}{2}$.
With this extra assumption, it is advantageous to shift the line of $s$-integration far to the left.

Recall from~\eqref{eq:meromorphic_continuation} that the meromorphic continuation of $Z(s, 0, f\times f)$ gains additional residual terms as $\Re s$ is taken further negative.
These residual terms contribute the two dominant bounds in our short-interval estimate.
The first residual term $\rho_{\frac{3}{2}}(s)$ has a pole which contributes $O_\delta(X^{\frac{3}{2}}/y)$, while the rest of the residual terms contribute $O_{\delta, \epsilon} (X^{\frac{1}{2}} y^{\frac{23}{12}+\epsilon} (\log y)^2)$ in aggregate.

In Section~\ref{ssec:Wsff_residue_discrete} we then consider the contribution from the discrete spectra of $Z(s, 0, f\times f)$.
Under the assumption $y \ll X^{1-\delta}$ with $\delta > \frac{1}{2}$, the work of~\cite{HoffsteinHulse13} allows us to bound the discrete spectrum by $O_{\delta}(X^{\frac{1}{2}} y^{2} (\log y)^{\frac{1}{2}})$.
Note that this estimate is slightly larger than the bound within the continuous spectrum.

\begin{lemma}\label{lem:Wsff_Mellin_Barnes_Residue_upper_bound}
  \begin{equation*}
    \frac{1}{2\pi i} \int_{(\sigma_s)} \frac{W(s-1; f,f)}{s+k-2} \exp\left(\frac{\pi s^2}{y^2}\right) \frac{X^s}{y}ds \ll_{\delta} \frac{X^{\frac{3}{2}}}{y} + X^{\frac{1}{2}} y^{2}(\log y)^{\frac{1}{2}}.
  \end{equation*}
\end{lemma}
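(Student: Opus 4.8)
The plan is to estimate the integral in~\eqref{line2:decomposition} by feeding in the decomposition $W(s;f,f) = L(s,f\times f)\zeta(2s)^{-1} + Z(s,0,f\times f)$ of Proposition~\ref{prop:D_decomposition} and the spectral expansion of $Z$ from Proposition~\ref{prop:Z_meromorphic_continuation}, and then shifting the $s$-contour far to the left. Because the argument of $W$ is $s-1$ rather than $s$, the Rankin--Selberg and shifted-convolution polar structure of $W$ is translated one unit to the right in $s$: the residue that sits harmlessly near $s = \tfrac12$ for the $W(s)$ term of Lemma~\ref{lem:Wsff_upper_bound} now sits at $s = \tfrac32$ and, through the factor $X^s$, produces a genuinely larger contribution of size $X^{3/2}/y$. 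Throughout I would exploit that the Gaussian weight $\exp(\pi s^2/y^2)$ has modulus $\exp(\pi(\sigma^2 - t^2)/y^2)$ on $\Re s = \sigma$, so it decays like $\exp(-\pi t^2/y^2)$ and effectively truncates $\lvert \Im s\rvert \lesssim y$ while staying of size $O(1)$ at the real half-integer points where residues are collected. The hypothesis $y \ll X^{1-\delta}$ with $\delta > \tfrac12$ guarantees that $\lvert X^s\rvert = X^\sigma$ is small on the far-left line, so the shifted contour integral is negligible and the crossed poles carry the main terms.

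I would treat $W(s-1;f,f)$ in two groups, as the two spectral components behave differently. The first group combines $L(s-1,f\times f)\zeta(2s-2)^{-1}$ with the continuous spectrum of $Z(s-1,0,f\times f)$, whose meromorphic continuation~\eqref{eq:meromorphic_continuation} collects the residual terms $\rho_{3/2-m}(s-1)$ as $\Re s$ decreases. The would-be dominant pole lies at $s = 2$, where $L(s-1,f\times f)$ inherits the pole of the Rankin--Selberg convolution at argument $1$; but the Eisenstein inner product in $\rho_{3/2}(s-1)$ (see~\eqref{eq:zeta_residual}) has a matching pole at $s=2$, coming from the pole of $E(\cdot, 2-(s-1))$ at $2-(s-1)=1$. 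The first task is to verify that these cancel --- the ``remarkable cancellation'' --- so that no spurious $X^2/y$ term survives. The next pole, at $s=\tfrac32$, comes from the quotient of Gamma factors in $\rho_{3/2}(s-1)$ once its $\zeta$ factors cancel against $\zeta^*$, and its residue yields the leading $O_\delta(X^{3/2}/y)$. The remaining integral, taken near $\Re s = \tfrac12$ together with the lower residual terms, I would bound by inserting convexity or Weyl-type subconvexity estimates for $\zeta$ on the critical line into $\mathcal{Z}(s,0,z)$ and controlling the Eisenstein inner products; since the Gaussian confines $\lvert \Im s\rvert \lesssim y$, the resulting zeta moment up to height $y$ produces the aggregate $O_{\delta,\epsilon}(X^{1/2} y^{23/12+\epsilon}(\log y)^2)$, with $\lvert X^s\rvert = X^{1/2}$ supplying the $X$ exponent.

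For the second group, the discrete spectrum~\eqref{line:definition_discrete_spectrum} of $Z(s-1,0,f\times f)$, the Maass $L$-functions $L(s-\tfrac32,\mu_j)$ are entire, so moving the $s$-contour to $\Re s = \tfrac12$ creates no new poles and simply supplies the factor $X^{1/2}$. The ratio of Gamma functions $G(s-1,it_j)$ decays exponentially unless $\lvert \Im s\rvert$ is comparable to $t_j$, which in combination with the Gaussian truncation $\lvert \Im s\rvert \lesssim y$ restricts the spectral sum to $t_j \lesssim y$, hence to $\asymp y^2$ forms by Weyl's law. Inserting the mean-square bounds for the spectral coefficients $\rho_j(1)\langle \mathcal{V}_{f,f},\mu_j\rangle$ and the growth estimates for $L(\cdot,\mu_j)$ from~\cite{HoffsteinHulse13}, and integrating against the Gaussian, I would arrive at $O_\delta(X^{1/2} y^2(\log y)^{1/2})$.

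Assembling the two groups gives $X^{3/2}/y + X^{1/2} y^2(\log y)^{1/2}$, the intermediate continuous-spectrum bound $X^{1/2}y^{23/12+\epsilon}(\log y)^2$ being absorbed by the discrete term since $\tfrac{23}{12} < 2$, which is the claimed estimate. I expect the main obstacle to be the discrete-spectrum sum: one must balance the exponential decay of $G(s-1,it_j)$ in $t_j$ against the simultaneous growth of the spectral averages and the Maass $L$-values, uniformly as the $s$-integration sweeps $\lvert \Im s\rvert$ up to $y$, and extracting exactly $y^2(\log y)^{1/2}$ depends on the precise second-moment input of~\cite{HoffsteinHulse13}. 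A secondary but delicate point is making the polar cancellation at $s=2$ rigorous, so that the true leading contribution is the $X^{3/2}/y$ residue at $s=\tfrac32$ and not a spurious $X^2/y$.
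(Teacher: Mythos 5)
Your overall strategy matches the paper's: the same grouping of $L(s-1,f\times f)\zeta(2s-2)^{-1}$ with the continuous spectrum, the cancellation of the pole at $s=2$, the residue of $\rho_{\frac32}(s-1)$ at $s=\frac32$ producing $X^{3/2}/y$, and a separate treatment of the discrete spectrum. (One smaller imprecision: to get the exponent $\frac{23}{12}$ for the further residual terms the paper needs Li's subconvexity bound $L(\mathrm{Sym}^2 f,\tfrac12+it)\ll t^{2/3+\epsilon}$ for the degree-three factor of $L(\tfrac12+it,f\times f)$, not just estimates for $\zeta$; also, the $m$th residual integral cannot be pushed left of $\Re s=\frac32-m$ because the uncompleted Eisenstein series $E(\cdot,s+m-1)$ has poles at nontrivial zeros of $\zeta$, so these integrals must be estimated in place rather than ``near $\Re s=\frac12$.'')

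The genuine gap is in the discrete spectrum. Your claim that moving the contour to $\Re s=\frac12$ ``creates no new poles'' is false: the factor $G(s-1,it_j)$ contains $\Gamma(s-\frac32+it_j)\Gamma(s-\frac32-it_j)$, whose poles on $\Re s=\frac32$ are indeed cancelled by the trivial zeros of $L(s-\frac32,\mu_j)$, but whose poles at $s=\frac12\mp it_j$ are not cancelled and sit exactly on your proposed contour. These poles are not a technicality --- their residues are precisely the source of the term $X^{1/2}y^2(\log y)^{1/2}$. Moreover, the repaired version of your argument (stop at $\Re s=\frac12+\epsilon$ and bound in absolute value using the spectral mean bound of~\cite{HoffsteinHulse13}) loses a factor of $y$: on that line the integrand for a fixed $\mu_j$ has no exponential decay in $t=\Im s$ for $\lvert t\rvert>\lvert t_j\rvert$, so the $t$-integration runs over a range of length $\asymp y$ and one obtains roughly $\frac{X^{1/2}}{y}\sum_{t_j\lesssim y}\lvert\rho_j(1)\langle\mathcal V_{f,f},\mu_j\rangle\rvert\, y^{3-k}\ll X^{1/2}y^{3}(\log y)^{1/2}$, rather than $X^{1/2}y^{2}(\log y)^{1/2}$. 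That weaker bound optimizes at $y=X^{1/4}$ and recovers only Jutila's interval length $X^{3/4}$, so the lemma as stated would not follow. The paper avoids this by using the functional equation of $L(s,\mu_j)$ to rewrite the gamma factors as cosecants, shifting all the way to $\Re s=-M_\delta$ so that the shifted integral is negligible, and collecting the residues at $s=\frac12-2m\pm it_j$; in a residue the Gaussian is evaluated at the single point $\Im s=\pm t_j$ instead of being integrated over $\lvert t\rvert\lesssim y$, which is exactly where the crucial factor of $y$ is saved. Your proposal needs this residue extraction (or a genuine exploitation of oscillation in the $t$-integral) to close.
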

\begin{proof}
  See Section~\ref{sec:Wsff_Mellin_Barnes_Residue}.
\end{proof}

Combining~\eqref{eq:goal_inequality} with the three bounds from Lemmas~\ref{lem:Wsff_upper_bound},~\ref{lem:MellinBarnesWsff_upper_bound}, and~\ref{lem:Wsff_Mellin_Barnes_Residue_upper_bound}, we get the following theorem.

\begin{theorem}\label{thm:main_theorem}
  For $y \ll X^{1 - \delta}$ with $\delta > \tfrac{1}{2}$,
  \begin{equation}\label{eq:main_theorem1}
    \sum_{\lvert n - X \rvert < X/y} \frac{\lvert S_f(n) \rvert^2}{n^{k-1}} \ll_{\delta} \frac{X^{\frac{3}{2}}}{y} + X^{\frac{1}{2}} y^{2} (\log y)^{\frac{1}{2}}.
  \end{equation}
  Choosing $y = X^{\frac{1}{3}} (\log X)^{-\frac{1}{6}}$, we see that
  \begin{equation*}
    \frac{1}{X^{\frac{2}{3}} (\log X)^{\frac{1}{6}}} \sum_{\lvert n - X \rvert < X^{\frac{2}{3}}(\log X)^{\frac{1}{6}}} \frac{\lvert S_f(n) \rvert^2}{n^{k-1}} \ll X^{\frac{1}{2}}.
  \end{equation*}
\end{theorem}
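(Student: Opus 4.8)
The plan is to assemble the estimate \eqref{eq:main_theorem1} directly from the three deferred lemmas and then perform an elementary optimization in $y$. First I would invoke the positivity inequality \eqref{eq:goal_inequality}, which already bounds the short-interval sum on the left of the theorem by the concentrating integral transform of $D(s, S_f \times S_f)$ on a line $\Re s = \sigma_s > 3$, where the Dirichlet series converges absolutely. Substituting the decomposition from Proposition~\ref{prop:D_decomposition} and using linearity of the integral, this transform splits as the sum of the three integrals \eqref{line1:decomposition}, \eqref{line2:decomposition}, and \eqref{line3:decomposition}, coming respectively from $W(s;f,f)$, from $W(s-1;f,f)/(s+k-2)$, and from the Mellin--Barnes $z$-integral of $W(s-z;f,f)$.

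Next I would apply the three lemmas term by term. Lemma~\ref{lem:Wsff_upper_bound} bounds \eqref{line1:decomposition} by $X^{4/3+\epsilon}/y$, and Lemma~\ref{lem:MellinBarnesWsff_upper_bound} bounds \eqref{line3:decomposition} by the same quantity, after identifying the ratio of Gamma factors with the Beta function $B(s+k-1-z,z)$. The dominant contribution comes from Lemma~\ref{lem:Wsff_Mellin_Barnes_Residue_upper_bound}, which bounds \eqref{line2:decomposition} by $X^{3/2}/y + X^{1/2}y^2(\log y)^{1/2}$; here the $X^{3/2}/y$ term is the residue of the pole carried by $\rho_{3/2}(s)$ from \eqref{eq:zeta_residual}. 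Since $X^{4/3+\epsilon}/y \ll X^{3/2}/y$ for $\epsilon$ small, the first and third contributions are absorbed, and summing the three bounds yields \eqref{eq:main_theorem1} under the stated hypothesis $y \ll X^{1-\delta}$ with $\delta > \tfrac12$, which is precisely what Lemma~\ref{lem:Wsff_Mellin_Barnes_Residue_upper_bound} requires.

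It then remains to optimize. Balancing the two terms of \eqref{eq:main_theorem1} means solving $X^{3/2}/y = X^{1/2} y^2 (\log y)^{1/2}$, i.e. $y^3 (\log y)^{1/2} \asymp X$; using $\log y \asymp \log X$ this gives $y \asymp X^{1/3}(\log X)^{-1/6}$, which is the choice recorded in the statement. With this $y$ one checks $y \ll X^{1-\delta}$ for any fixed $\delta \in (\tfrac12, \tfrac23)$, so the hypothesis holds with room to spare: the near-optimal exponent $\tfrac13$ sits safely below the threshold $\tfrac12$ forced by the method. Substituting, both terms of \eqref{eq:main_theorem1} become $\ll X^{7/6}(\log X)^{1/6}$, while the window half-width is $X/y = X^{2/3}(\log X)^{1/6}$; dividing through by this length delivers the stated bound $\ll X^{1/2}$.

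The real difficulty of the theorem lives entirely inside the three lemmas, and especially inside Lemma~\ref{lem:Wsff_Mellin_Barnes_Residue_upper_bound}, where one must isolate the single polar residual term $\rho_{3/2}(s)$ (giving the $X^{3/2}/y$ term), show that the remaining residual terms $\rho_{3/2-m}(s)$ contribute only the lower-order $X^{1/2}y^{23/12+\epsilon}(\log y)^2$ in aggregate, and bound the discrete spectrum of $Z(s,0,f\times f)$ (giving the $X^{1/2}y^2(\log y)^{1/2}$ term) via the shifted-convolution estimates of~\cite{HoffsteinHulse13}. Granting those lemmas, the present assembly is routine bookkeeping plus the one-line optimization above; the only point genuinely demanding care is confirming that the admissible range $\delta \in (\tfrac12, \tfrac23)$ is nonempty and consistent with the near-optimal choice $y \asymp X^{1/3}$, which it is because $\tfrac13 < \tfrac12$.
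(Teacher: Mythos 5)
Your proposal is correct and follows essentially the same route as the paper: combine the positivity inequality \eqref{eq:goal_inequality} with the decomposition of Proposition~\ref{prop:D_decomposition}, apply Lemmas~\ref{lem:Wsff_upper_bound}, \ref{lem:MellinBarnesWsff_upper_bound}, and \ref{lem:Wsff_Mellin_Barnes_Residue_upper_bound} termwise, absorb the $X^{4/3+\epsilon}/y$ contributions into $X^{3/2}/y$, and balance the two surviving terms at $y = X^{1/3}(\log X)^{-1/6}$. Your verification that this choice of $y$ is compatible with the hypothesis $y \ll X^{1-\delta}$, $\delta > \tfrac12$, matches the paper's Remark~\ref{rmk:main_theorem}.
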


\begin{remark}\label{rmk:main_theorem}
  The Classical Conjecture for intervals of length $X/y$ holds for any $y$ such that the first error summand in~\eqref{eq:main_theorem1} dominates.
  Our error bounds are thus optimized when $y(\log y)^{\frac{1}{6}}=X^{\frac{1}{3}}$.
  Unfortunately, the function $y \mapsto y(\log y)^{\frac{1}{6}}$ does not have an inverse expressible in common functions; nevertheless, the minimal interval is well-approximated by
  $\lvert n - X \rvert < X^{\frac{2}{3}}(\log X)^{\frac{1}{6}}$.
\end{remark}

An application of summation by parts yields Theorem~\ref{thm:classical_conjecture_in_twothird_intervals} as stated in the introduction, which shows that the Classical Conjecture is true on average over short intervals of length $X^{\frac{2}{3}}(\log X)^{\frac{1}{6}}$.
In addition, we see that the Hafner-Ivi\'{c} bound, $S_f(n) \ll n^{\frac{k-1}{2} + \frac{1}{3}}$, can be improved in the mean square aspect over intervals of length slightly less than $X^{\frac{2}{3}}$.

\begin{corollary}
  For $\delta \in (\frac{1}{2}, \frac{2}{3}]$,
  \begin{equation*}
    \frac{1}{X^\delta} \sum_{\lvert n - X \rvert< X^\delta} \lvert S_f(n) \rvert^2 \ll_{\delta}X^{k + \frac{3}{2} - 3\delta} (\log X)^{\frac{1}{2}}.
  \end{equation*}
  This improves upon the Hafner-Ivi\'{c} bound for $\delta \in (\frac{11}{18},\frac{2}{3}]$.
\end{corollary}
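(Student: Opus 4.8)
The plan is to specialize Theorem~\ref{thm:main_theorem} to a single well-chosen value of $y$ and then convert the normalized sum $\sum |S_f(n)|^2 n^{-(k-1)}$ into the unnormalized average appearing in the corollary. Since the corollary concerns intervals of half-length $X^\delta$, I would match $X/y = X^\delta$ by setting $y = X^{1-\delta}$. For $\delta \in (\tfrac12,\tfrac23]$ this choice satisfies the hypothesis $y \ll X^{1-\delta'}$ with $\delta' > \tfrac12$ of Theorem~\ref{thm:main_theorem} (take $\delta' = \delta$, which is admissible since $\delta > \tfrac12$), so the theorem applies directly.

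Substituting $y = X^{1-\delta}$ into the bound of Theorem~\ref{thm:main_theorem} and simplifying the exponents gives
\[
  \sum_{|n-X| < X^\delta} \frac{|S_f(n)|^2}{n^{k-1}} \ll_\delta X^{\frac12 + \delta} + X^{\frac52 - 2\delta}(\log X)^{\frac12}.
\]
I would then check which summand dominates. The two exponents $\tfrac12+\delta$ and $\tfrac52-2\delta$ differ by $2-3\delta$, which is nonnegative exactly when $\delta \le \tfrac23$; hence on the range $(\tfrac12,\tfrac23]$ the second summand dominates and the right-hand side is $\ll_\delta X^{\frac52-2\delta}(\log X)^{\frac12}$. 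To pass to the unnormalized sum, observe that every $n$ with $|n-X|<X^\delta$ satisfies $n \asymp X$, so $n^{k-1}\asymp X^{k-1}$ with implied constants depending only on $\delta$. Multiplying through by $X^{k-1}$ and dividing by $X^\delta$ then yields
\[
  \frac{1}{X^\delta}\sum_{|n-X|<X^\delta}|S_f(n)|^2 \ll_\delta X^{k-1}\cdot X^{\frac52-2\delta}\cdot X^{-\delta}(\log X)^{\frac12} = X^{k+\frac32-3\delta}(\log X)^{\frac12},
\]
which is precisely the asserted bound.

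For the comparison with Hafner--Ivi\'c, I would feed their pointwise estimate $S_f(n)\ll n^{\frac{k-1}{2}+\frac13}$ into the same average. The interval contains $\asymp X^\delta$ integers, each contributing $|S_f(n)|^2 \ll X^{k-1+\frac23}$, so the Hafner--Ivi\'c input produces an average of size $\ll X^{k-\frac13}$. Comparing exponents, the bound just obtained is smaller exactly when $k+\tfrac32-3\delta < k-\tfrac13$, i.e.\ $\delta > \tfrac{11}{18}$; since this exponent inequality is strict on $(\tfrac{11}{18},\tfrac23]$, the harmless factor $(\log X)^{\frac12}$ does not affect the conclusion, and the stated range of improvement follows.

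The argument is essentially routine bookkeeping, and I do not expect a genuine obstacle. The one point requiring care is the notation clash between the parameter $\delta$ of Theorem~\ref{thm:main_theorem}, which controls the admissible range $y \ll X^{1-\delta}$, and the interval exponent $\delta$ of the corollary; these must be reconciled when verifying the hypothesis $\delta > \tfrac12$, and one must confirm that the balance point $\delta=\tfrac23$ in the corollary coincides exactly with the crossover of the two error terms so that the interval $(\tfrac12,\tfrac23]$ is the correct domain of validity.
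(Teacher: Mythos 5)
Your proposal is correct and follows exactly the route the paper intends: specialize Theorem~\ref{thm:main_theorem} with $y = X^{1-\delta}$, note that the second error term dominates precisely for $\delta \le \tfrac{2}{3}$, and renormalize using $n^{k-1} \asymp X^{k-1}$ on the interval; the comparison with the Hafner--Ivi\'c exponent at $\delta = \tfrac{11}{18}$ is also handled correctly. No gaps.
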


\section{Areas for Further Inquiry}

In this paper, the assumption $y \ll X^{\frac{1}{2}}$ is essential to much of the analysis.
There are two bounds in the paper that would need to be strengthened for Theorem~\ref{thm:classical_conjecture_in_twothird_intervals} to hold in intervals of length $X^{\frac{1}{2} + \epsilon}$.

First, better subconvexity results for $L(\tfrac{1}{2} + it, f\times f)$ would lead to better bounds for~\eqref{eq:MB_residue_jthresiduals_integral_abs} in the continuous spectrum.
Assuming the Lindel\"{o}f hypothesis for $L(s, f\times f)$, we get a bound for~\eqref{eq:MB_residue_jthresiduals_integral_abs} consistent with the Classical Conjecture in intervals of length $X^{\frac{1}{2} + \epsilon}$.

Secondly, we bound~\eqref{eq:MB_residue_discrete_pole} by absolute values and use
\begin{equation}\label{eq:hoffsteinhulse-bound}
  \sum_{\lvert t_j \rvert \sim T}  \left| \rho_j(1) \langle \lvert f \rvert^2 \Im(\cdot)^k, \mu_j \rangle \right|  \ll T^{k + 1} (\log T)^{\frac{1}{2}},
\end{equation}
from Proposition~4.3 of~\cite{HoffsteinHulse13}, to estimate the contribution from the discrete spectrum in Section~\ref{ssec:Wsff_residue_discrete}.
There are good reasons to suppose that~\eqref{eq:hoffsteinhulse-bound} is of the correct size, but in the process of taking absolute values we ignore oscillatory behaviour.
We suspect it is possible to better take advantage of the oscillatory behaviour of the summands in~\eqref{eq:MB_residue_discrete_pole} to lower the resulting bound.

Progress towards the Classical Conjecture, such as in~\cite{Deligne},~\cite{HafnerIvic89}, and~\cite{rankin1990sums}, has historically focused on full-integral weight cusp forms.
It is likely that this restriction is unnecessary and that the analogue of the Classical Conjecture will hold for half-integral weight cusp forms as well.
The methods in~\cite{hkldw} and this paper, with some small modifications, apply to the case of half-integral weight cusp forms.

However, there is one important difference.
The proof of~\eqref{eq:hoffsteinhulse-bound} in~\cite{HoffsteinHulse13} relies heavily on $f$ being of full-integral weight.
A version of~\eqref{eq:hoffsteinhulse-bound} has been proven in the half-integral case in~\cite{mehmet2015}, which takes the form
\begin{equation} \label{eq:kiral-bound}
  \sum_{\lvert t_j \rvert \sim T} \left \lvert \rho_j(1) \langle \lvert f \rvert^2 \Im(\cdot)^k, \mu_j \rangle \right \rvert \ll T^{2k + \frac{5}{2} + \epsilon}.
\end{equation}
Direct application of \eqref{eq:kiral-bound} along the lines of this paper leads to estimates for the discrete spectrum that deteriorate as the weight $k$ increases.
With a bound more similar to~\eqref{eq:hoffsteinhulse-bound}, one could produce analogous results for half-integral weight forms in short intervals of length uniform in $k$.

\section{Properties of the Integral Transforms}\label{sec:integral_transform}

In this section, we prove the Mellin integral transform~\eqref{eq:integral_transform} and a generalization of that identity for later use in Section~\ref{sec:Wsff_Mellin_Barnes_Integral}.

\begin{lemma}
  \begin{equation}\label{eq:integral_transform_later_section}
    \frac{1}{2\pi i} \int_{(2)} \exp\left(\frac{\pi s^2}{y^2}\right) \frac{X^s}{y}ds = \frac{1}{2\pi} \exp\left(-\frac{y^2 \log^2 X}{4\pi}\right).
  \end{equation}
\end{lemma}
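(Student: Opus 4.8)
The plan is to recognize the integrand as a shifted Gaussian and reduce the identity to the classical evaluation $\int_{-\infty}^{\infty} e^{-\pi t^2}\,dt = 1$. First I would merge the two exponential factors by writing $\exp(\pi s^2/y^2)\,X^s = \exp\!\big(\pi s^2/y^2 + s\log X\big)$ and completing the square in $s$:
\[
  \frac{\pi s^2}{y^2} + s\log X = \frac{\pi}{y^2}\left(s + \frac{y^2\log X}{2\pi}\right)^{2} - \frac{y^2\log^2 X}{4\pi}.
\]
The final term is independent of $s$, so it factors out of the integral as $\exp\!\big(-y^2\log^2 X/(4\pi)\big)$, which already matches the claimed right-hand side. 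Thus the task reduces to showing
\[
  \frac{1}{2\pi i}\int_{(2)} \exp\!\left(\frac{\pi}{y^2}\left(s + \frac{y^2\log X}{2\pi}\right)^{2}\right)\frac{ds}{y} = \frac{1}{2\pi}.
\]

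Next I would substitute $u = s + y^2\log X/(2\pi)$, turning the left-hand side into $\frac{1}{2\pi i}\int_{(c)} \exp(\pi u^2/y^2)\,du/y$ over a vertical line $\Re u = c$ for some real constant $c$. The integrand $\exp(\pi u^2/y^2)$ is entire, and on the line $u = c + iv$ one has $\lvert \exp(\pi u^2/y^2)\rvert = \exp\!\big(\pi(c^2 - v^2)/y^2\big)$, which decays like $\exp(-\pi v^2/y^2)$ as $\lvert v\rvert \to \infty$. This Gaussian decay, together with the vanishing of the contributions from the horizontal connecting segments as they are pushed to infinity, justifies an application of Cauchy's theorem to shift the contour to the imaginary axis $\Re u = 0$ without changing the value of the integral.

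Finally, on the line $\Re u = 0$ I would set $u = iv$ with $du = i\,dv$, so that $\int_{(0)} \exp(\pi u^2/y^2)\,du = \int_{-\infty}^{\infty}\exp(-\pi v^2/y^2)\,i\,dv$. The change of variables $v = yt$ gives $iy\int_{-\infty}^{\infty} e^{-\pi t^2}\,dt = iy$. Inserting this into the prefactor $\tfrac{1}{2\pi i y}$ produces exactly $\tfrac{1}{2\pi}$, and restoring the factored constant $\exp\!\big(-y^2\log^2 X/(4\pi)\big)$ yields the stated identity.

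Every step is routine; the only point requiring genuine care is the justification of the contour shift, namely verifying that the integrand is entire (so that no residues intervene) and that its Gaussian decay on vertical lines controls the horizontal segments in the limit. The rapid decay $\exp(-\pi v^2/y^2)$ makes both of these verifications immediate.
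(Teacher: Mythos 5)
Your proof is correct and follows essentially the same route as the paper's: complete the square in the exponent, shift the contour to the imaginary axis (justified by the integrand being entire with Gaussian decay on vertical lines), and reduce to the standard Gaussian integral. The only difference is that you spell out the contour-shift justification in more detail than the paper does.
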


\begin{proof}
  Write $X^s = e^{s\log X}$ and complete the square in the exponents.
  Since the integrand is entire and the integral is absolutely convergent, we may perform a change of variables $s \mapsto s-y^2 \log X/2\pi$ and shift the line of integration back to the imaginary axis.
  This yields
  \begin{equation*}
    \frac{1}{2\pi i} \exp\left( - \frac{y^2 \log^2 X}{4\pi}\right) \int_{(0)} e^{\pi s^2/y^2} \frac{ds}{y}.
  \end{equation*}
  The change of variables $s \mapsto isy$ transforms the integral into the standard Gaussian, completing the proof.
\end{proof}

Since the integral is absolutely convergent, we can produce additional integral identities by differentiating with respect to $X$ under the integral sign.
In particular, for non-negative integers $m$ and $\ell$, we multiply both sides of~\eqref{eq:integral_transform_later_section} by $X^{m+\ell-1}$, differentiate with respect to $X$ a total of $m$ times, and multiply by $X^{1-\ell}$ to get the following.
\begin{lemma}\label{lem:transform_general_j}
\begin{align*}
  \frac{1}{2\pi i} \int_{(2)} &\frac{\Gamma(s+m+\ell)}{\Gamma(s+\ell)} \exp\!\left(\frac{\pi s^2}{y^2}\right) \frac{X^s}{y} ds \\
	&= X^{1-\ell}\frac{d^m}{dX^m} \left(\frac{X^{m+\ell-1}}{2\pi} \exp\!\left( - \frac{y^2 \log^2 X}{4\pi }\right)\right).
\end{align*}
By induction, it follows that the right-hand side above is
\begin{equation}\label{eq:j_deriv_integral_bound}
  O_{m,\ell}\left( \bigg( y + y^2 \lvert \log X \rvert \bigg)^m \exp\left(-\frac{y^2 \log^2 X}{4\pi}\right) \right),
\end{equation}
where $O_{m,\ell}$ indicates that the implicit constant depends on $m$ and $\ell$.
\end{lemma}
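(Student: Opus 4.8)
The plan is to prove the exact identity first and then bound its right-hand side by \eqref{eq:j_deriv_integral_bound}. For the identity I would argue exactly as the paragraph preceding the statement suggests: multiply \eqref{eq:integral_transform_later_section} by $X^{m+\ell-1}$ and differentiate $m$ times in $X$. The rapid decay of $\exp(\pi s^2/y^2)$ along the line $(2)$ makes the integral and each of its $X$-derivatives converge absolutely and uniformly on compact subsets of $(0,\infty)$, which justifies differentiating under the integral sign. Since $\frac{d^m}{dX^m}X^{s+m+\ell-1}=\frac{\Gamma(s+m+\ell)}{\Gamma(s+\ell)}X^{s+\ell-1}$, multiplying through by $X^{1-\ell}$ restores $X^s$ in the integrand and produces the Gamma quotient in front, giving the displayed identity.

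For the bound \eqref{eq:j_deriv_integral_bound} I would set $h(X):=\exp(-y^2\log^2 X/(4\pi))$ and $F_m(X):=X^{1-\ell}\frac{d^m}{dX^m}(X^{m+\ell-1}h)$, so the right-hand side equals $F_m/(2\pi)$. Using the operator identity $X^m\frac{d^m}{dX^m}=\prod_{i=0}^{m-1}(\vartheta-i)$, where $\vartheta:=X\frac{d}{dX}$, together with the conjugation rule $\vartheta X^c=X^c(\vartheta+c)$, all powers of $X$ cancel and one is left with the clean recursion
\[
F_m=(\vartheta+\ell+m-1)F_{m-1},\qquad F_0=h.
\]
Writing $u=\log X$, $F_m=P_m(u)\,h$, and $\vartheta h=-\tfrac{y^2u}{2\pi}h$, this becomes the polynomial recursion $P_m=P_{m-1}'+(\ell+m-1-\tfrac{y^2u}{2\pi})P_{m-1}$ with $P_0=1$, which is the natural object for an induction.

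I would then prove by induction that $P_m$ is a combination, with coefficients $O_{m,\ell}(1)$, of monomials $y^{2a}u^b$ subject to $0\le b\le a$ and $2a-b\le m$. The three operations building $P_m$ from $P_{m-1}$ send a monomial $(a,b)$ to $(a,b-1)$ (differentiation), $(a,b)$ (the constant), and $(a+1,b+1)$ (multiplication by $-\tfrac{y^2u}{2\pi}$); each preserves $b\le a$ and raises $2a-b$ by at most $1$, so the invariant persists from the base case $P_0=1$. For $y\gg 1$ — the regime in which the transform is applied — each admissible monomial satisfies $y^{2a}|u|^b=y^{2(a-b)}(y^2|u|)^b\le y^{m-b}(y^2|u|)^b\le(y+y^2|u|)^m$, where the first inequality uses $2(a-b)\le m-b$ (from $2a-b\le m$) with $y\ge 1$, and the second is the $k=b$ term of the binomial expansion (valid since $b\le m$). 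Summing over the finitely many monomials gives $|P_m(\log X)|\ll_{m,\ell}(y+y^2|\log X|)^m$, hence $|F_m|\ll_{m,\ell}(y+y^2|\log X|)^m h$, which is \eqref{eq:j_deriv_integral_bound}.

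The main obstacle is securing the correct power of $y$: a crude estimate yields a larger exponent, and it is precisely the invariant $2a-b\le m$ that, combined with $y\gg 1$, collapses every monomial into $(y+y^2|\log X|)^m$. I would also flag that the estimate presupposes $y\gg 1$; as $y\to 0$ the leading constant $\Gamma(\ell+m)/\Gamma(\ell)$ survives while $(y+y^2|\log X|)^m\to 0$, so \eqref{eq:j_deriv_integral_bound} is meant only in the large-$y$ range relevant to the applications of the transform.
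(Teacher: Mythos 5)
Your proof is correct and follows the same route the paper intends: the identity by differentiating \eqref{eq:integral_transform_later_section} under the integral sign (justified by the Gaussian decay of $\exp(\pi s^2/y^2)$ on the line $\Re s = 2$), and the bound by the induction on $m$ that the paper leaves entirely to the reader, which you carry out cleanly via the $\vartheta = X\,\frac{d}{dX}$ recursion and the monomial invariant $2a - b \le m$. Your caveat that \eqref{eq:j_deriv_integral_bound} presupposes $y \gg 1$ (otherwise the constant term $\Gamma(\ell+m)/\Gamma(\ell)$ survives while the right-hand side vanishes) is accurate, and harmless here since the paper only ever applies the lemma in that regime.
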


\section{Bounding the Contribution from $W(s; f,f)$}\label{sec:Wsff}

In this section we prove Lemma~\ref{lem:Wsff_upper_bound}.
By separating the cases $h = 0$ and $m=0$ and performing a small inclusion-exclusion correction, we collect $W(s; f,f)$ into a more standard Dirichlet series:
\begin{align}
  W(s; f,f) &= \sum_{n \geq 1} \frac{a(n)\overline{S_f(n)} + \overline{a(n)}S_f(n) - \lvert a(n) \rvert^2}{n^{s + k - 1}}. \label{eq:W_as_Dirichlet_series}
\end{align}
Denote the $n$th coefficient of this Dirichlet series by $w(n)$.
Since $a(n) \ll n^{\frac{k-1}{2} + \epsilon}$ and $S_f(n) \ll n^{\frac{k-1}{2} + \frac{1}{3}}$, by~\cite{HafnerIvic89}, we see that $w(n) \ll n^{k-1 + \frac{1}{3} + \epsilon}$. Thus the Dirichlet series for $W(s;f,f)$ converges absolutely for $\Re s > \frac{4}{3}+\epsilon$.

By applying the integral transform~\eqref{eq:integral_transform_later_section} directly to $W(s; f,f)$, we get
\begin{align*}
  &\frac{1}{2\pi i} \int_{(\sigma_s)} W(s; f,f) \exp\left(\frac{\pi s^2}{y^2}\right) \frac{X^s}{y}ds \\
  &\quad\ll \sum_{n \geq 1} n^{\frac{1}{3} + \epsilon} \exp\left( -\frac{y^2 \log^2 (\frac{X}{n})}{4\pi}\right) \ll_\epsilon \frac{X^{\frac{4}{3} + \epsilon}}{y}.
\end{align*}
This completes the proof.

\begin{remark}
  This bound uses the best-known polynomial bound on $S_f(n)$, and so is probably weaker than reality; under the Classical Conjecture, this term would contribute no more than $X^{\frac{5}{4} + \epsilon}/y$.
Nevertheless, this term is so much smaller than the others that improving the bound is not necessary for this application.
\end{remark}

\section{Bounding the Contribution from $W(s-1; f,f)/(s+k-2)$}\label{sec:Wsff_Mellin_Barnes_Residue}

In this section, we prove Lemma~\ref{lem:Wsff_Mellin_Barnes_Residue_upper_bound} by bounding
\begin{equation}\label{eq:MB_residue_base}
  \frac{1}{2\pi i} \int_{(\sigma_s)} \frac{W(s - 1; f,f)}{s + k - 2} \exp\left(\frac{\pi s^2}{y^2}\right) \frac{X^s}{y} ds.
\end{equation}
Recall that $W(s; f,f) = L(s, f\times f)\zeta(2s)^{-1} + Z(s, 0, f\times f)$ and that $Z(s, 0, f\times f)$ splits into discrete and continuous components in its meromorphic continuation as given in Proposition~\ref{prop:Z_meromorphic_continuation}.

In Section~\ref{ssec:Wsff_residue_continuous} we consider the part of~\eqref{eq:MB_residue_base} corresponding to $L(s, f\times f)\zeta(2s)^{-1}$ and the continuous spectrum of $Z(s, 0, f\times f)$.
In Section~\ref{ssec:Wsff_residue_discrete} we consider the part of~\eqref{eq:MB_residue_base} corresponding to the discrete spectrum of $Z(s, 0, f\times f)$.

\subsection{Rankin-Selberg Convolution and Continuous Spectrum}\label{ssec:Wsff_residue_continuous}

Accounting for the residual terms that appear through meromorphic continuation, the contribution to~\eqref{eq:MB_residue_base} from the continuous spectrum of $Z(s-1, 0, f\times f)/(s+k-2)$ is given by
\begin{equation}\label{eq:MB_residue_continuous_base}
  \begin{split}
    &\frac{(4\pi)^k}{2 (2\pi i)^2} \int_{(\sigma_s)}\int_{(0)} \bigg[ \frac{\Gamma(s - \frac{3}{2} + z) \Gamma(s - \frac{3}{2} - z)}{\Gamma(s-1) \Gamma(s + k - 1)} \frac{\zeta(s - \frac{3}{2} + z) \zeta(s - \frac{3}{2} - z)}{\zeta^*(1 + 2z)} \\
  &\quad \times \langle \mathcal{V}_{f,f}, \overline{E(\cdot, \tfrac{1}{2} - z)} \rangle  + \sum_{m = 0}^{\lfloor \frac{5}{2} - \Re s \rfloor} \frac{\rho_{\frac{3}{2} - m}(s-1)}{s + k - 2} \bigg] \exp\left(\frac{\pi s^2}{y^2}\right) \frac{X^s}{y} dzds.
  \end{split}
\end{equation}
Above, we are using the meromorphic continuation in~\eqref{eq:meromorphic_continuation}.
We refer to the first term in the brackets as the \emph{main integrand}; the remaining terms are the \emph{residual terms}.
For $m \geq 0$, the $m$th residual term $\rho_{\frac{3}{2} - m}(s-1)$ appears only when $\Re (s - 1) < \frac{3}{2} - m$.
So the first residual term appears in the meromorphic continuation when $\Re s < \frac{5}{2}$.

Now we move the line of $s$-integration in the negative direction.
Fix $\delta > \frac{1}{2}$ and suppose $y \ll X^{1 - \delta}$.
We shift $\sigma_2$ to $-M_\delta$, where $M_\delta > 0$ is a constant depending on $\delta$, which we specify later.
During this shift, approximately $M_\delta / 2$ residual terms $\rho_{\frac{3}{2} - m}(s-1)/(s + k - 2)$ appear in the meromorphic continuation as indicated in~\eqref{eq:MB_residue_continuous_base} above.
We separate each residual term and bound them separately from the shifted main integral.

\subsubsection{Shifted Main Integral}

The identity for the Rankin-Selberg convolution, as in Section~1.6 of~\cite{Bump98}, implies
\begin{equation}\label{eq:rankin-selberg-identity}
  \langle \mathcal{V}_{f,f}, \overline{E(\cdot, s)}\rangle = 2 \cdot\frac{\Gamma(s + k - 1)}{(4\pi)^{s + k - 1}} \frac{L(s, f\times f)}{\zeta(2s)}.
\end{equation}
We substitute \eqref{eq:rankin-selberg-identity} into the shifted main integrand of~\eqref{eq:MB_residue_continuous_base} and apply the asymmetric functional equation of the Riemann zeta function,
\begin{equation}\label{eq:zeta_asymmetric_functional_equation}
  2\zeta(z)\Gamma(z) = (2\pi)^{z} \sec(\tfrac{\pi z}{2}) \zeta(1 - z),
\end{equation}
to the zeta functions in the numerator.
Since $\Re s < 0$, the reflected zeta functions are in the domain of convergence and are uniformly bounded.

Bounding the integrand by absolute values, we recall that $1/\zeta(1 \pm 2z) \ll \lvert \log z \rvert$ for $\Re z = 0$ (see~\cite[3.11.10]{titchmarsh1986theory}), and apply Stirling's approximation to see that the integrand in~\eqref{eq:MB_residue_continuous_base} is
\begin{equation*}
  O_{\delta} \left( X^{-M_\delta} \frac{\lvert z \rvert^{k-1} \lvert \log z \rvert }{\lvert s \rvert^{k - 3 - 2M_\delta}} \mathcal{E}(s,z) \lvert L(\tfrac{1}{2} - z, f\times f) \rvert\right),
\end{equation*}
where $\mathcal{E}(s,z)$ are the collected exponential contributions
\begin{equation*}
  \mathcal{E}(s,z) = \frac{\exp(\pi \lvert \Im s \rvert)}{\exp(\pi \max\{ \lvert \Im s \rvert, \lvert \Im z \rvert\})} \exp\!\left( - \frac{\pi \Im^2 s}{y^2}\right).
\end{equation*}

To bound the integral, we split the line of $z$-integration into two regions.
When $\lvert \Im z \rvert < \lvert \Im s \rvert$, we bound~\eqref{eq:MB_residue_continuous_base} by
\begin{equation*}
  X^{-M_\delta} \int_{(-M_\delta)} \lvert s \rvert^{2 + 2M_\delta} \log \lvert s \rvert \exp\left(\frac{- \pi \Im^2 s}{y^2}\right)\int_{-\lvert \Im s \rvert}^{\lvert \Im s \rvert} \lvert L(\tfrac{1}{2} - it, f\times f) \rvert \frac{dt ds}{y}.
\end{equation*}
The integral in the $t$-variable is quickly seen to be $O(\lvert \Im s \rvert^{2})$ using Soundarajan's general weak subconvexity result~\cite{soundararajan2010weak}.
Soundarajan's result indicates a further small fractional power of $\log \lvert \Im s \rvert$ savings, which we neglect for simplicity.
The remaining integral is
\begin{equation}\label{eq:MB_residue_firstintegrand_boundzsmall}
  O_{\delta} \left( X^{-M_\delta} y^{4 + 2M_\delta} \log y \right).
\end{equation}

The contribution from the remaining region, $\lvert \Im z \rvert > \lvert \Im s \rvert$, is easier to bound as $\mathcal{E}(s,z)$ has exponential decay in both variables. In this case, Soundarajan's subconvexity bound and a similar argument gives that the contribution from this region is
\begin{equation}\label{eq:MB_residue_firstintegrand_boundzlarge}
  O_{\delta} \left( X^{-M_\delta} y^{3 + 2M_\delta} \log y\right).
\end{equation}

By choosing $M_\delta$ large, the two bounds~\eqref{eq:MB_residue_firstintegrand_boundzsmall} and~\eqref{eq:MB_residue_firstintegrand_boundzlarge} can be made arbitrarily small for $X$ large enough, under the assumption $y \ll X^{1-\delta}$ and $\delta > \tfrac{1}{2}$.
While the implicit constant depending on $M_\delta$ might be very large, it is independent of $X$ and $y$.
In particular, if we choose $M_\delta > 4(2\delta - 1)^{-1} (1-\delta)$, the two bounds \eqref{eq:MB_residue_firstintegrand_boundzsmall} and \eqref{eq:MB_residue_firstintegrand_boundzlarge} are $O_\delta(\log y)$ and non-dominant.

\subsubsection{The First Residual Term}

The first residual term, $\rho_{\frac{3}{2}}(s-1)/(s + k - 2)$, of $Z(s-1,0,f\times f)/(s + k - 2)$ appears for $\Re s < \frac{5}{2}$.
So beginning with $\sigma_s \in (2, \tfrac{5}{2})$ and substituting the definition~\eqref{eq:zeta_residual} of $\rho_{\frac{3}{2}}$ in the first residual term in~\eqref{eq:MB_residue_continuous_base}, we are led to consider the integral
\begin{equation*}
  \frac{(4\pi)^k}{4 \pi i} \int_{(\sigma_s)} \frac{\zeta(2s - 4)\Gamma(2s - 4) \langle \mathcal{V}_{f,f}, \overline{E(\cdot, 3 - s)} \rangle}{\Gamma(s-1) \Gamma(s + k - 1)\zeta^*(2s - 4)} \exp\!\left(\frac{\pi s^2}{y^2}\right) \frac{X^s}{y}ds.
\end{equation*}
By expressing the inner product in terms of the Rankin-Selberg convolution via~\eqref{eq:rankin-selberg-identity}, cancelling zeta functions, and applying the Gauss duplication formula to the ratio $\Gamma(2s - 4)/\Gamma(s-2)$, we are able to rewrite the above integral as
\begin{equation}\label{eq:MB_residue_firstresidual_base}
  \frac{1}{2\pi i} \int_{(\sigma_s)} \frac{\Gamma(s - \frac{3}{2}) \Gamma(k + 2 - s) L(3 - s, f\times f)}{(4\pi)^{\frac{9}{2} - 2s} \Gamma(s-1) \Gamma(s + k - 1) \zeta(6 - 2s)} \exp\!\left(\frac{\pi s^2}{y^2}\right) \frac{X^s}{y} ds.
\end{equation}

Shifting $\sigma_s$ into the interval $(\tfrac{3}{2}, 2)$ passes over a pole at $s = 2$.
It is shown in~\cite{hkldw} that the pole at $s = 2$ from $L(s-1, f\times f) \zeta(2s - 2)^{-1}$ is cancelled by the pole at $s = 2$ of $Z(s-1, 0, f\times f) $.

We then shift $\sigma_s$ to $-M_\delta$, passing approximately $M_\delta$ poles at points of the form $s = \tfrac{3}{2} - m$, with $m \in \mathbb{Z}_{\geq 0}$.
The residues at each pole are bounded in $X$ and $y$ by the pole with the largest residue, which occurs at $s = \tfrac{3}{2}$.
The contribution towards \eqref{eq:MB_residue_firstresidual_base} from these polar terms is therefore
\begin{equation}\label{eq:MV_residual_firstresidual_bound_poles}
  O_\delta \left( \frac{X^{\frac{3}{2}}}{y}\right).
\end{equation}

\begin{remark}
  This is one of the two balancing terms that produces the final bound in our main theorem.
\end{remark}

It remains to bound the shifted first-residual integral~\eqref{eq:MB_residue_firstresidual_base}.
As $\Re s < 2$, we note that the $L$-function and $\zeta(6 - 2s)$ are in their half-planes of convergence and are uniformly bounded.
Writing $s = -M_\delta + it$, bounding the integrand by absolute values, and applying Stirling's approximation gives that~\eqref{eq:MB_residue_continuous_base} is
\begin{equation}\label{eq:MB_residue_firstresidual_bound_shiftedintegral}
  O_\delta \left( X^{-M_\delta} \int_{-\infty}^\infty \lvert t \rvert^{\frac{5}{2} + 2M_\delta} \exp\left( - \frac{\pi t^2}{y^2}\right) \frac{dt}{y} \right) \ll_\delta X^{-M_\delta} y^{\frac{5}{2} + 2M_\delta}.
\end{equation}
Just as in the analysis of the bounds~\eqref{eq:MB_residue_firstintegrand_boundzsmall} and~\eqref{eq:MB_residue_firstintegrand_boundzlarge}, we can make~\eqref{eq:MB_residue_firstresidual_bound_shiftedintegral} arbitrarily small as $X$ tends to infinity by choosing $M_\delta$ sufficiently large, provided that $y \ll X^{1 - \delta}$ with $\delta > \frac{1}{2}$.
In particular, if we choose $M_\delta > 3(2\delta - 1)^{-1}(1-\delta)$, then~\eqref{eq:MB_residue_firstresidual_bound_shiftedintegral} is non-dominant.

\subsubsection{Further Residual Terms}

In~\cite{hkldw}, it is shown that the second residual term of $Z(s-1, 0, f\times f)$, which we denote as $\rho_{\frac{1}{2}}(s-1)$ and which appears for $\Re s < \frac{3}{2}$, identically cancels with $L(s - 1, f\times f)\zeta(2s - 2)^{-1}$.
In other words, the second residual term and the Rankin-Selberg convolution $L$-function completely cancel for $\Re s < \frac{3}{2}$.

Further residual terms can all be handled systematically.
For each integer $m$ with $1 \leq m \leq M_\delta + \frac{5}{2}$, a further residual term $\rho_{\frac{3}{2} - m}(s)/(s+k-2)$ is introduced as part of the meromorphic continuation of $Z(s-1, 0, f\times f)/(s+k-2)$.
Substituting the expression for $\rho_{\frac{3}{2} - m}(s)$ given in~\eqref{eq:jth_residual} into the integral~\eqref{eq:MB_residue_continuous_base}, we see that the integrals corresponding to these residual terms take the form
\begin{align}\label{eq:MB_residue_mthresiduals_integral_base}
  \begin{split}
  \frac{2(-1)^m (4\pi)^k \zeta(-m)}{m! \,2\pi i} \int_{(\sigma_m)}& \frac{\zeta(2s + m - 3) \Gamma(2s + m - 3)}{\Gamma(s - 1) \Gamma(s + k - 1) \zeta^*(4 - 2m - 2s)} \\
 &\times \langle \mathcal{V}_{f,f}, \overline{E(\cdot, s + m - 1)} \rangle\exp\left(\frac{\pi s^2}{y^2}\right) \frac{X^s}{y}ds,
  \end{split}
\end{align}
where $\sigma_m = \frac{3}{2} - m$ for each $m$.
Notice that the apparent pole from $\Gamma(2s + m - 3)$ cancels with the pole of $\zeta^*(4 - 2m - 2s)$ in the denominator, so that each integrand is actually holomorphic along the lines of integration.

Without assuming progress towards the Riemann Hypothesis, it is not possible to shift the lines of integration further in the negative direction, as the uncompleted Eisenstein series $E(\cdot, s + m - 1)$ has poles at $s = 1 + \frac{\rho}{2} - m$ for each nontrivial zero $\rho$ of the zeta function.

We therefore estimate each integral directly, writing the inner product in terms of Rankin-Selberg convolutions via~\eqref{eq:rankin-selberg-identity}, reflecting the zeta function in the numerator with its functional equation, bounding the integrand by absolute values, and applying Stirling's formula to bound each integral~\eqref{eq:MB_residue_mthresiduals_integral_base} by
\begin{equation}\label{eq:MB_residue_jthresiduals_integral_abs}
  X^{\frac{3}{2} - m} \int_{-\infty}^\infty (1 + \lvert t \rvert)^{2m - 1} \frac{\lvert L(\frac{1}{2} + it, f\times f) \rvert}{\lvert \zeta(1 + 2it) \rvert^2} \exp\left(- \frac{\pi t^2}{y^2}\right) \frac{dt}{y}.
\end{equation}

Recall that $L(s, f\times f)$ factors as the product $L(\text{Sym}^2 f, s) \zeta(s)$, where $L(\text{Sym}^2 f, s)$ denotes the $\text{GL}(3)$ symmetric square $L$-function.
In~\cite{li2011bounds}, Li shows that $\lvert L(\text{Sym}^2 f, \tfrac{1}{2} + it)\rvert \ll_\epsilon t^{\frac{2}{3} + \epsilon}$.
Combined with the convexity bound $\lvert \zeta(\tfrac{1}{2} + it) \rvert \ll_\epsilon t^{\frac{1}{4} + \epsilon}$ and de la Vall\'{e}e-Poussin's estimate, $1/\zeta(1 + it) \ll \lvert \log t \rvert$, to show that the $m$th integral~\eqref{eq:MB_residue_jthresiduals_integral_abs} is
\begin{equation}\label{eq:MB_residue_jthresiduals_bounds}
  O_{\epsilon,m} (X^{\frac{3}{2} - m} y^{2m - \frac{1}{12} + \epsilon} (\log y)^2).
\end{equation}

Using once again the assumptions that $y \ll X^{1 - \delta}$ and $\delta > \tfrac{1}{2}$, we see that each term of the form~\eqref{eq:MB_residue_jthresiduals_bounds} is bounded above by the instance of~\eqref{eq:MB_residue_jthresiduals_bounds} with $m=1$.
We have approximately $M_\delta$ integrals, which jointly contribute
\begin{equation}\label{eq:MB_residue_jthresiduals_bound_final}
  O_{\delta} \left(X^{\frac{1}{2}} y^{\frac{23}{12}} (\log y)^2\right).
\end{equation}

\begin{remark}
  This nearly matches the bound from~\ref{eq:MB_residue_discrete_large_bound}, the second of the two balancing terms that produce the upper bound in Theorem~\ref{thm:main_theorem}.
  Note that we have used the partial subconvexity bounds from~\cite{li2011bounds}, though by the Lindel\"{o}f hypothesis we should expect the vastly better bound $L(\tfrac{1}{2} + it, f\times f) \ll t^{\epsilon}$.
\end{remark}

\subsection{Discrete Spectrum}\label{ssec:Wsff_residue_discrete}

Beginning with $\sigma_s > \frac{5}{2}$, the contribution towards~\eqref{eq:MB_residue_base} from the discrete spectum of $Z(s-1, 0, f\times f)/(s+k-2)$ is given by
\begin{align}\label{eq:MB_residue_discrete_base}
  \begin{split}
    \frac{(4\pi)^k}{2} \sum_j \rho_j(1) \langle \mathcal{V}_{f,f}, \mu_j \rangle \int_{(\sigma_s)} &\frac{\Gamma(s - \frac{3}{2} + it_j) \Gamma(s - \frac{3}{2} + it_j)}{\Gamma(s - 1) \Gamma(s + k - 1)} \\
    &\times L(s - \tfrac{3}{2}, \mu_j) \exp\!\left(\frac{\pi s^2}{y^2}\right) \frac{X^s}{y} ds.
  \end{split}
\end{align}
Above, we are using the meromorphic continuation from Proposition~\ref{prop:Z_meromorphic_continuation}.
A priori, the sum is over the complete orthonormal basis of Maass eigenforms, but we note that $\langle \mathcal{V}_{f,f}, \mu_j\rangle = 0$ whenever $\mu_j$ is odd, as was demonstrated in \cite{hkldw} as a consequence of Watson's triple product formula \cite{watson2008rankin}.
So we may assume that each $\mu_j$ is even in the spectral sum above.

For an even Maass form $\mu_j$, its corresponding $L$-function satisfies the functional equation~\cite[Chap~3]{Goldfeld2006automorphic}
\begin{equation}\label{eq:maassform_functional}
  \Lambda(s) := \pi^{-s} \Gamma\left(\frac{s + it_j}{2}\right) \Gamma\left( \frac{s - it_j}{2}\right) L(s, \mu_j) = \Lambda(1-s),
\end{equation}
indicating that $L(s, \mu_j)$ has trivial zeroes at $s=\pm it_j$.
So we shift the line of integration in \eqref{eq:MB_residue_discrete_base} to $\sigma_s = \frac{3}{2} - \epsilon$ without encountering any poles.

Using the Gauss duplication formula, one can split the product of two gamma factors in the numerator of~\eqref{eq:MB_residue_discrete_base} into a product of four gamma factors, two of which are the necessary gamma functions to complete $L(s - \frac{3}{2}, \mu_j)$.
Applying the functional equation~\eqref{eq:maassform_functional} and the gamma reflection property, $\Gamma(z) \Gamma(1-z) = \pi \csc(\pi z)$, we find that all of these gamma factors cancel out.

Up to constants and factors of $2$ and $\pi$, which do not contribute to the upper bound, we are able to rewrite~\eqref{eq:MB_residue_discrete_base} as
\begin{align}\label{eq:MB_residue_discrete_base_rewrite}
  \begin{split}
    \sum_j \rho_j(1) \langle \mathcal{V}_{f,f}, \mu_j \rangle\int_{(\sigma_s)} &\frac{\csc(\frac{\pi}{2}(s - \frac{1}{2} + it_j)) \csc(\frac{\pi}{2}(s - \frac{1}{2} - it_j))}{\Gamma(s-1) \Gamma(s + k - 1)} \\
    &\,\times L(\tfrac{5}{2} - s, \mu_j) (2\pi)^{2s} \exp\!\left(\frac{\pi s^2}{y^2}\right) \frac{X^s}{y}ds.
  \end{split}
\end{align}
We then shift the line of integration from $\sigma_s = \frac{3}{2} - \epsilon$ to $\sigma_s = -M_\delta$, where $M_\delta$ is a constant depending on $\delta$ which we'll still specify later.
The integrand of~\eqref{eq:MB_residue_discrete_base_rewrite} has poles at $s = \tfrac{1}{2} - 2m \pm it_j$ for $m \in \mathbb{Z}_{\geq 0}$.
The residues at these poles take the form
\begin{align}\label{eq:MB_residue_discrete_pole}
  \begin{split}
    \frac{1}{y} \sum_j &\rho_j(1) \langle \mathcal{V}_{f,f}, \mu_j \rangle \frac{\csc(\pi(-m \pm it_j)) L(2 + 2m \mp it_j, \mu_j)}{\Gamma(-\frac{1}{2} - 2m \pm it_j) \Gamma(k - 2m - \frac{1}{2} \pm it_j)} \\
                       &\times X^{\frac{1}{2} - 2m \pm it_j} (2\pi)^{\pm 2it_j} \exp\!\left( \frac{\pi (\frac{1}{2} - 2m \pm it_j)^2}{y^2}\right).
  \end{split}
\end{align}
Bounding the summands by absolute values and applying Stirling's approximation, we bound~\eqref{eq:MB_residue_discrete_pole} above by
\begin{equation}\label{eq:MB_residue_discrete_pole_initialbound}
  \frac{X^{\frac{1}{2} - 2m}}{y} \sum_j \left\lvert \rho_j(1) \langle \mathcal{V}_{f,f}, \mu_j \rangle\right\rvert \lvert t_j \rvert^{2 + 4m - k} \exp\!\left( - \frac{\pi t_j^2}{y^2}\right).
\end{equation}
In Proposition~4.3 of~\cite{HoffsteinHulse13}, it is proved that
\begin{equation}\label{eq:hoffsteinhulse_spectralbound}
  \sum_{\frac{T}{2} \leq t_j \leq 2T} \lvert \rho_j(1) \langle \lvert f \rvert^2 \Im(\cdot)^k, \mu_j \rangle \rvert \ll T^{k + 1} (\log T)^{1/2}.
\end{equation}
Weighting individual terms and noting that $\langle V_{f,f}, \mu_j \rangle = 2 \langle \vert f \vert^2 \Im(\cdot)^k,\mu_j \rangle$ when $\mu_j$ is even, it follows that
\begin{align*}
\sum_{\frac{T}{2} \leq t_j \leq 2T} \lvert \rho_j(1) \langle \mathcal{V}_{f,f}, \mu_j \rangle \rvert  &\vert t_j \vert^{2+4m-k} \exp\left(-\frac{\pi tj^2}{y^2}\right) \\
&\ll T^{3 + 4m} (\log T)^{1/2} \exp\left(-\frac{\pi (T/2)^2}{y^2}\right).
\end{align*}
Summing dyadically, we extend this bound to all $t_j$ to show that~\eqref{eq:MB_residue_discrete_pole_initialbound} is
\begin{equation}\label{eq:MB_residue_discrete_pole_middlebound}
  O_{m} \left( X^{\frac{1}{2} - 2m} y^{2 + 4m} (\log y)^{\frac{1}{2}} \right).
\end{equation}
There are about $M_\delta / 2$ pairs of poles, each contributing residues which are bounded above by the $m = 0$  term in~\eqref{eq:MB_residue_discrete_pole_middlebound}.
It follows that the total contribution of these poles is
\begin{equation}\label{eq:MB_residue_discrete_large_bound}
  O_{\delta} (X^{\frac{1}{2}} y^{2} (\log y)^{\frac{1}{2}}).
\end{equation}

\begin{remark}
  This estimate is the second of the two balancing terms in our final bound.
  We used absolute values to go from~\eqref{eq:MB_residue_discrete_pole} to~\eqref{eq:MB_residue_discrete_pole_initialbound} in order to directly apply~\eqref{eq:hoffsteinhulse_spectralbound}, but this ignores the oscillatory behaviour of the original summands.
\end{remark}

It remains to bound the shifted discrete component integral~\eqref{eq:MB_residue_discrete_base_rewrite} on the line $\sigma_s = -M_\delta$.
Write $s = -M_\delta + it$, and split the integral and sum into two regions depending on the relative size of $\lvert t_j \rvert$ and $\lvert \Im s \rvert$.

When $\lvert t_j \rvert < \lvert t \rvert$, bounding the integrand by absolute values and applying Stirling's approximation leads to the upper bound
\begin{equation}\label{eq:MB_residue_discrete_integral_tjsmall}
  \ll_\delta X^{-M_\delta} \! \int_{0}^\infty \sum_{\lvert t_j \rvert < \lvert t \rvert} \left \lvert \rho_j(1) \langle \lvert f \rvert^2 \Im(\cdot)^k, \mu_j \rangle \right \rvert t^{2M_\delta - k + 3} \exp\!\left( -\frac{\pi t^2}{y^2}\right) \frac{dt}{y}.
\end{equation}
Applying~\eqref{eq:hoffsteinhulse_spectralbound} shows that the inner spectral sum is $O_\epsilon \left(t^{k + 1 + \epsilon}\right)$.
Inserting this bound gives the integral
\begin{equation*}
  X^{-M_\delta} \int_{0}^\infty t^{2M_\delta + 4 + \epsilon} \exp \left( - \frac{\pi t^2}{y^2} \right) \frac{dt}{y}.
\end{equation*}
Performing the change of variables $t \mapsto ty$ removes $y$-dependence from the integral, and we see that~\eqref{eq:MB_residue_discrete_integral_tjsmall} is
\begin{equation}\label{eq:MB_residue_discrete_integral_tjsmall_bound}
  O_{\epsilon, \delta} (X^{-M_\delta}y^{4 + 2M_\delta + \epsilon}).
\end{equation}

Similarly, when $\lvert t \rvert < \lvert t_j \rvert$, we have the upper bound
\begin{align}\label{eq:MB_residue_discrete_integral_tjlarge}
  \begin{split}
  \ll_\delta X^{-M_\delta} \int_0^\infty &\left( \sum_{\lvert t_j \rvert > t} \left \lvert \rho_j(1) \langle \lvert f \rvert^2 \Im(\cdot)^k, \mu_j \rangle \right \rvert \exp\left( \frac{\pi (\lvert t \rvert - \lvert t_j \rvert)}{2}\right) \right) \\
  &\times t^{2M_\delta - k + 3} \exp\left( - \frac{\pi t^2}{y^2}\right) \frac{dt}{y}.
  \end{split}
\end{align}
Using the exponential decay and dyadic summation with~\eqref{eq:hoffsteinhulse_spectralbound}, one can show that the spectral parenthetical above is $O_\epsilon(t^{k + 1 + \epsilon})$. 
Inserting this bound into~\eqref{eq:MB_residue_discrete_integral_tjlarge} and performing the change of variables $t \mapsto ty$ decouples $X$ and $y$ from the integral, and we have that~\eqref{eq:MB_residue_discrete_integral_tjlarge} is also
\begin{equation}\label{eq:MB_residue_discrete_integral_tjlarge_bound}
  O_{\epsilon,\delta} (X^{-M_\delta}y^{4 + 2M_\delta + \epsilon}).
\end{equation}

In both cases, the assumptions $y \ll X^{1-\delta}$ and $\delta > \frac{1}{2}$ indicate that~\eqref{eq:MB_residue_discrete_integral_tjsmall_bound} and~\eqref{eq:MB_residue_discrete_integral_tjlarge_bound} can be made arbitrarily small as $X$ tends to infinity.
In particular, if $M_\delta > 5(2\delta - 1)^{-1} (1 - \delta)$, then these terms are non-dominant.

Thus the total contribution from the discrete components of $Z(s, 0, f\times f)$ is
\begin{equation}
  O_{\delta} (X^{\frac{1}{2}} y^{2} (\log y)^{1/2}).
\end{equation}

\section{Bounding the Contribution from the Mellin-Barnes Integral~\eqref{line3:decomposition}}\label{sec:Wsff_Mellin_Barnes_Integral}

In this section, we prove Lemma~\ref{lem:MellinBarnesWsff_upper_bound} by bounding the double integral
\begin{equation}\label{eq:MB_integral_base_integral}
  \frac{1}{(2\pi i)^2} \int_{(\sigma_s)} \int_{(\sigma_z)} \! W(s-z; f,f) \zeta(z) B(s+k-1-z,z) \exp\!\left(\frac{\pi s^2}{y^2}\right) \frac{X^s}{y} dzds,
\end{equation}
in which
\begin{equation*}
  B(s,z) = \frac{\Gamma(z) \Gamma(s)}{\Gamma(s +z)}
\end{equation*}
is the Beta function, $\sigma_z = \epsilon \in (0,1)$, and $\sigma_s > \frac{3}{2} + \epsilon$.

Fix $\delta > 0$ and suppose that $y \ll X^{1 - \delta}$.
Applying the asymmetric functional equation of the Riemann zeta function~\eqref{eq:zeta_asymmetric_functional_equation} allows us to rewrite \eqref{eq:MB_integral_base_integral} as
\begin{align}\label{eq:MB_integral_shifted_base}
  \begin{split}
    \frac{1}{2 (2\pi i)^2} \int_{(\sigma_s)} \int_{(-\epsilon)} &W(s-z; f,f) \zeta(1-z)  \\
    &\times \frac{(2\pi)^z\Gamma(s + k - 1 - z)}{\cos(\tfrac{\pi z}{2}) \Gamma(s + k - 1)} \exp\left(\frac{\pi s^2}{y^2}\right) \frac{X^s}{y} dzds.
  \end{split}
\end{align}

Note that the integrand in \eqref{eq:MB_integral_shifted_base} experiences significant exponential decay in vertical strips, so we have absolute convergence away from poles.
We may therefore simultaneously shift the two lines of integration, keeping $\sigma_s = \frac{3}{2} + \sigma_z + \epsilon$ while shifting $\sigma_z$ to $-\epsilon - M_\delta$, where $M_\delta \geq 0$ is a constant depending on $\delta$ to be specified later.
This introduces several poles, the first of which comes from $\zeta(1-z)$ as $\sigma_z$ passes by $z=0$, and further poles at odd $z = -m$ for $1 \leq m \leq M_\delta$ coming from the cosine term.

The pole from $\zeta(1-z)$ at $z = 0$ has residue
\begin{equation} \label{eq:MB_trivial_first_residue_bound}
-\frac{1}{4\pi i} \int_{(\frac{3}{2}+\epsilon)} W(s;f,f) \mathrm{exp}\left(\frac{\pi s^2}{y^2}\right)\frac{X^s}{y}ds \ll_\epsilon \frac{X^{\frac{4}{3}+\epsilon}}{y}.
\end{equation}
The integral is a constant multiple of the term considered in Lemma~\ref{lem:Wsff_upper_bound}, which gives the stated upper bound.
The poles at negative odd integers $z=-m$ for $1 \leq m \leq M_\delta$ each have residue
\begin{equation} \label{eq:MB_jth_residue}
  \frac{r_m}{2\pi i} \int_{(\frac{3}{2} - m + \epsilon)} W(s+m; f,f) \frac{\Gamma(s + k - 1 + m)}{\Gamma(s + k - 1)} \exp\!\left(\frac{\pi s^2}{y^2}\right) \frac{X^s}{y} ds,
\end{equation}
in which
\begin{equation*}
  r_m = \frac{(-1)^m \zeta(1 + m)}{2(2\pi)^m}.
\end{equation*}

Note that $W(\cdot;f,f)$ lies within the half-plane of convergence of its Dirichlet series~\eqref{eq:W_as_Dirichlet_series}.
Recall that $w(n)$ denotes the $n$th coefficient of the Dirichlet series representation for $W(\cdot; f,f)$.
Expanding $W(\cdot;f,f)$ in series and applying the integral transform from Lemma~\ref{lem:transform_general_j} bounds~\eqref{eq:MB_jth_residue} by
\begin{equation}\label{eq:MB_jth_residue_bound_1}
 O_{m}\bigg( r_m \sum_{n \geq 1} \frac{w(n)}{n^{m+k-1}} (y + y^2 \vert \log(X/n)\vert)^m \exp\left(- \frac{y^2 \log^2(X/n)}{4\pi}\right)\bigg).
\end{equation}
Using the bound $w(n) \ll n^{k-1 + \frac{1}{3} + \epsilon'}$ as in Section~\ref{sec:Wsff}, we bound~\eqref{eq:MB_jth_residue_bound_1} above by
\begin{equation} \label{eq:MB_integral_comparison}
\ll_{m,\epsilon'}\int_1^\infty t^{\frac{1}{3}-m+\epsilon'}(y+y^2 \vert\log(t/X)\vert)^m \exp\left(-\frac{y^2 \log^2(t/X)}{4\pi}\right)dt.
\end{equation}
The change of variables $u=y \log(t/X)$ transforms~\eqref{eq:MB_integral_comparison} into
\begin{align*}
X^{\frac{4}{3}-m+\epsilon'}y^{m-1} \int_{0}^\infty e^{(\frac{4}{3}-m + \epsilon')\frac{u}{y}}(1+ \vert u \vert)^m \mathrm{exp}\left(-\frac{u^2}{4\pi}\right)\,du.
\end{align*}
When $m = 1$ and when $y \gg 1$ we can bound the integral by the value when $y = 1$,
\begin{equation*}
  \int_0^\infty e^{(\frac{1}{3} + \epsilon')u}(1 + \lvert u \rvert)^m \exp\left( - \frac{u^2}{4\pi} \right) du,
\end{equation*}
which converges independently of $y$.
Similarly, for $m > 1$ we can bound the integral by ignoring the exponential damping term $e^{(\frac{4}{3} - m + \epsilon')\frac{u}{y}}$,
\begin{equation*}
  \int_0^\infty (1 + \lvert u \rvert)^m \exp\left( - \frac{u^2}{4\pi} \right) du,
\end{equation*}
which also converges independently of $y$.
So we conclude that~\eqref{eq:MB_jth_residue_bound_1} is
\begin{equation*}
  O_{m, \epsilon'} \left(X^{\frac{1}{3} + \epsilon'} \left(\frac{y}{X}\right)^{m-1}\right)
\end{equation*}
for any $\epsilon' > 0$.

Under the assumption $y \ll X^{1-\delta}$, the residue~\eqref{eq:MB_jth_residue} associated to $m = 1$ dominates the other residues for large $X$.
There are at most $M_\delta / 2$ residues of the form~\eqref{eq:MB_jth_residue}, so the total contribution from these residues is
\begin{equation}\label{eq:MB_total_residue_bound}
  O_{\delta, \epsilon'} (X^{\frac{1}{3} + \epsilon'}).
\end{equation}

Now, having shifted the lines of integration so that $\sigma_s = \frac{3}{2} - M_\delta$ and $\sigma_z = -\epsilon - M_\delta$, we bound the shifted integral~\eqref{eq:MB_integral_shifted_base} in absolute value.
The Dirichlet series $W(\cdot; f,f)$ and $\zeta(\cdot)$ are considered only within their half-planes of absolute convergence.
Using the Gamma reflection property $\Gamma(z)\Gamma(1-z) = \pi \csc(\pi z)$ to raise the Gamma function in the denominator to the numerator and applying Stirling's approximation leads us to consider
\begin{equation}\label{eq:MB_integral_after_stirlings}
  X^{\frac{3}{2} - M_\delta} \int_{(\sigma_s)}\int_{(\sigma_z)} \lvert s - z \rvert^{k + \epsilon} \lvert s \rvert^{M_\delta - k} \mathcal{E}(s,z) \exp\left(\frac{\pi s^2}{y^2}\right) \frac{dz ds}{y},
\end{equation}
where $\mathcal{E}(s,z)$ are the collected exponential contributions,
\begin{equation*}
  \mathcal{E}(s,z) = \exp\!\left(-\tfrac{\pi}{2}\lvert \Im(s - z) \rvert - \tfrac{\pi}{2} \lvert \Im z \rvert + \tfrac{\pi}{2} \lvert \Im s \rvert\right).
\end{equation*}

We bound~\eqref{eq:MB_integral_after_stirlings} in cases, splitting the integral into intervals based on the signs and relative sizes of $\Im s$ and $\Im z$. The dominant contributions occur when $\mathcal{E}(s,z)$ provides no additional exponential decay, i.e.\ when $\Im s > \Im z > 0$ or $\Im s < \Im z < 0$. In this first case, let $t=\Im s$ and $u = \Im z$ and compute
\begin{equation}
  \begin{split}
    X^{\frac{3}{2}-M_\delta}\int_0^\infty \int_0^t (t-u)^{k+\epsilon} t^{M_\delta-k} &\exp\left(-\frac{\pi t^2}{y^2}\right) \frac{dudt}{y} \\
    &=O_\delta \left( X^{\frac{3}{2}} y^{1 + \epsilon} \left( \frac{y}{X} \right)^{M_\delta} \right).
  \end{split} \label{eq:MB_bound_with_M}
\end{equation}
The other cases are extremely similar and are also bounded by the error term in~\eqref{eq:MB_bound_with_M}.

Recalling that $y \ll X^{1 - \delta}$, we see that~\eqref{eq:MB_bound_with_M} can be made arbitrarily small for large $X$, although perhaps with a very large implicit constant depending on the size of $M_\delta$.
In particular, if we choose $M_\delta > \delta^{-1}(\tfrac{5}{2} + \epsilon)$, then~\eqref{eq:MB_bound_with_M} is non-dominant.

In conclusion, by combining the upper bounds~\eqref{eq:MB_trivial_first_residue_bound} and~\eqref{eq:MB_total_residue_bound} and noting that $X^{\frac{4}{3} + \epsilon} y^{-1} \gg X^{\frac{1}{3} + \epsilon}$ when $y \ll X^{1-\delta}$, we see that our initial integral~\eqref{eq:MB_integral_base_integral} is indeed
\begin{equation}
  O_{\delta, \epsilon} \left( \frac{X^{\frac{4}{3} + \epsilon} }{y} \right),
\end{equation}
as we set out to prove.

\section*{References}

\bibliographystyle{alpha}
\bibliography{shortintervalsbib}

\begin{thebibliography}{HKLDW15}

\bibitem[Bum98]{Bump98}
D.~Bump.
\newblock {\em Automorphic Forms and Representations}, volume~55 of {\em
  Cambridge Studies in Advanced Mathematics}.
\newblock Cambridge University Press, 1998.

\bibitem[CN62]{chandrasekharan1962functional}
K~Chandrasekharan and Raghavan Narasimhan.
\newblock Functional equations with multiple gamma factors and the average
  order of arithmetical functions.
\newblock {\em Annals of Mathematics}, pages 93--136, 1962.

\bibitem[CN64]{chandrasekharan1964mean}
K~Chandrasekharan and Raghavan Narasimhan.
\newblock On the mean value of the error term for a class of arithmetical
  functions.
\newblock {\em Acta Mathematica}, 112(1):41--67, 1964.

\bibitem[Del74]{Deligne}
Pierre Deligne.
\newblock La conjecture de {W}eil. {I}.
\newblock {\em Inst. Hautes \'Etudes Sci. Publ. Math.}, (43):273--307, 1974.

\bibitem[EH15]{ernvall2015mean}
Anne-Maria Ernvall-Hyt{\"o}nen.
\newblock Mean square estimate for relatively short exponential sums involving
  {F}ourier coefficients of cusp forms.
\newblock In {\em Annales Academiae Scientiarum Fennicae. Mathematica},
  volume~40, 2015.

\bibitem[Gol06]{Goldfeld2006automorphic}
Dorian Goldfeld.
\newblock {\em Automorphic forms and {L}-functions for the group GL(n, R)},
  volume~13.
\newblock Cambridge University Press, 2006.

\bibitem[HHR16]{HoffsteinHulse13}
J.~Hoffstein, T.~Hulse, and A.~Reznikov.
\newblock Multiple {D}irichlet series and shifted convolutions.
\newblock {\em Journal of Number Theory}, pages 457--533, 2016.

\bibitem[HI89]{HafnerIvic89}
J.~Hafner and A.~Ivi\'{c}.
\newblock On sums of {F}ourier coefficients of cusp forms.
\newblock {\em L'Enseignement Math\'{e}matique}, \textbf{35}:375--382, 1989.

\bibitem[HKLDW15]{hkldw}
Thomas Hulse, Chan~Ieong Kuan, David Lowry-Duda, and Alexander Walker.
\newblock The second moment of sums of coefficients of cusp forms.
\newblock {\em arXiv preprint arXiv:1512.01299v2}, 2015.

\bibitem[Jut87]{jutila1987lectures}
M.~Jutila.
\newblock {\em Lectures on a Method in the Theory of Exponential Sums}.
\newblock Tata institute of fundamental research: Lectures on mathematics and
  physics. Tata institute of fundamental research, 1987.

\bibitem[Kir15]{mehmet2015}
E.~Mehmet Kiral.
\newblock Subconvexity for half integral weight {L}-functions.
\newblock {\em Mathematische Zeitschrift}, 281(3-4):689--722, 2015.

\bibitem[Li11]{li2011bounds}
Xiaoqing Li.
\newblock Bounds for gl (3)$\times$ gl (2) l-functions and gl (3) l-functions.
\newblock {\em Annals of Mathematics}, 173:301--336, 2011.

\bibitem[Ran90]{rankin1990sums}
RA~Rankin.
\newblock Sums of cusp form coefficients, automorphic forms and analytic number
  theory ({M}ontreal, {PQ}, 1989), 115--121, univ.
\newblock {\em Montr{\'e}al, Montreal, QC}, 1990.

\bibitem[Sou10]{soundararajan2010weak}
Kannan Soundararajan.
\newblock Weak subconvexity for central values of {L}-functions.
\newblock {\em Annals of mathematics}, 172(2):1469--1498, 2010.

\bibitem[THB86]{titchmarsh1986theory}
Edward~Charles Titchmarsh and David~Rodney Heath-Brown.
\newblock {\em The theory of the {R}iemann zeta-function}.
\newblock Oxford University Press, 1986.

\bibitem[Wat08]{watson2008rankin}
Thomas~C Watson.
\newblock Rankin triple products and quantum chaos.
\newblock {\em arXiv preprint arXiv:0810.0425}, 2008.

\bibitem[Wu09]{wu2009power}
J~Wu.
\newblock Power sums of {H}ecke eigenvalues and application.
\newblock {\em Acta Arithmetica}, 137:333--344, 2009.

\end{thebibliography}

\end{document}